\numberwithin{equation}{section}
\newtheorem{theorem}{Theorem}[]
\newtheorem{lemma}{Lemma}[]
\newtheorem{corollary}{Corollary}[]
\newtheorem{definition}{Definition}[]
\newtheorem{thmx}{Theorem}
\newtheorem{lemx}{Lemma}
\DeclareSymbolFont{AMSb}{U}{msb}{m}{n}
\DeclareMathAlphabet{\Bb}{U}{msb}{m}{n}
\newcommand{\fo}[1]{{\mbox{\footnotesize{$#1$}}}}
\DeclareMathOperator*{\limsp}{\overline{\lim}}
\DeclareMathOperator{\card}{\rm{card}}
\DeclareMathOperator{\sign}{\rm{sign}}
\begin{document}
\title[Smoothing of weights ...]{ Smoothing of weights in the Bernstein approximation problem}
\author[Andrew Bakan]{Andrew Bakan}
\address{Institute of Mathematics, National Academy of
Sciences of Ukraine, Kyiv 01601, Ukraine}%
\email{andrew@bakan.kiev.ua}%
\author[J\"urgen Prestin]{J\"urgen Prestin}%
\address{Institut f\"ur Mathematik, Universit\"at zu L\"ubeck,
D-23562 L\"ubeck, Germany}%
\email{prestin@math.uni-luebeck.de}%

\date{}
\subjclass{Primary 41A10, 46E30; Secondary 32A15, 32A60}%

\keywords{Polynomial approximation, weighted approximation, $C^0_{w}$ -spaces, entire functions}%

\thanks{The first author acknowledges  support from the German Academic Exchange Service (DAAD, Grant 57210233).}

\begin{abstract} In 1924  S.~Bernstein \cite{bern}  asked for conditions on a uniformly boun\-ded on $\Bb{R}$ Borel function (weight)
$w: \Bb{R} \to [0, +\infty )$  which imply the denseness of algebraic polynomials ${\mathcal{P} }$  in the seminormed  space
$ C^{\,0}_{w} $ defined  as the linear set $ \{f \in C (\Bb{R}) \ | \ w (x) f (x) \to 0 \ \mbox{as} \ {|x| \to +\infty}\}$ equipped with the seminorm
$\|f\|_{w} := \sup\nolimits_{x  \in  {\Bb{R}}}   w(x)| f( x )|$. In 1998 A.~Borichev and M.~Sodin \cite{bs} completely solved this problem for
all those weights $w$ for which ${\mathcal{P} }$ is dense in $ C^{\,0}_{w} $  but there exists a positive integer $n=n(w)$ such that
${\mathcal{P} }$ is not dense in $ C^{\,0}_{(1+x^{2})^{n} w}$. In the present paper we establish that if ${\mathcal{P} }$ is dense in
$ C^{\,0}_{(1+x^{2})^{n} w}$ for all $n \geq 0$ then for arbitrary $\varepsilon > 0$ there exists a weight
$W_{\varepsilon} \in C^{\infty} (\Bb{R})$ such that ${\mathcal{P}}$ is dense in  $C^{\,0}_{(1+x^{2})^{n} W_{\varepsilon}}$
for every $n \geq 0$ and  $W_{\varepsilon} (x) \geq w (x) + \mathrm{e}^{- \varepsilon |x|}$ for all $x\in \Bb{R}$.
\end{abstract}
\maketitle

\section{Introduction}

\vspace{0.25cm}
Let $C(\Bb{R})$ be the linear space of all continuous real-valued functions on $\Bb{R}$,
$\mathcal{W} (\Bb{R})$ the set of all uniformly bounded on $\Bb{R}$ Borel functions $w : \Bb{R} \to \Bb{R}^{+}:= [0, +\infty )$
which have an unbounded support $S_w := \left\{x \in {\Bb{R}} \, | \, w (x) > 0\right\} $ and
satisfy $|x|^{n}  w (x) \to 0$ as $|x| \to \infty$ for all $n \in \Bb{N}_{0} := \{0, 1, 2, ...   \}$.
Denote by $\mathcal{P} $ the set of all algebraic polynomials with real coefficients and by $C^{\infty}(\Bb{R})$ the family of all real-valued infinitely
continuously differentiable functions on $\Bb{R}$.

For $w \in \mathcal{W}(\Bb{R})$ the seminormed space $C^{\,0}_w (\Bb{R})$ consists of
the linear set of all  $f \in C(\Bb{R})$ with $\lim_{|x| \to +\infty}  w (x) \, f (x) = 0$
 and the semi-norm $\|\cdot\|_{w}$, where $\| f \|_{w} := \sup_{x \, \in \, {\Bb{R}}} w(x)\, |f( x )|$.

We recall the definition of the so-called {\emph{upper Baire function}} $M_{F}$ of $F : \Bb{R} \to \Bb{R}$
as $M_{F } (x) := \lim_{\delta \downarrow 0}
\sup_{y \in (x-\delta,  x+\delta) } F(y)$ (see \cite[p.~129]{nat}). If $F$ is locally bounded from above, then
$M_{F}$ is an upper semi-continuous function and $F (x) \leq M_{F} (x)$, $x \in \Bb{R}$.
It is easy to verify  that for arbitrary $-\infty < A < B < +\infty$,
$w \in \mathcal{W} (\Bb{R})$ and $f \in C (\Bb{R})$ we have
\begin{equation*}
    \sup\limits_{x \in (A, B)} w (x) \left| f (x)\right| =  \sup\limits_{x \in (A, B)} M_{w} (x) \left| f (x)\right|.
\end{equation*}
This means that the seminormed spaces $C^{\,0}_{w}(\Bb{R})$ and $C^{\,0}_{M_{w}}(\Bb{R})$
coincide identically and, in particular, $\mathcal{P}$ is dense in $C^{\,0}_{w}(\Bb{R})$ iff it is dense in
$C^{\,0}_{M_{w}}(\Bb{R})$. Thus, it is possible to assume everywhere below that $w \in \mathcal{W}^{*} (\Bb{R})$
where  $\mathcal{W}^{*} (\Bb{R})$ denotes the family of all those $w \in \mathcal{W}(\Bb{R})$
which are upper semi-continuous on $\Bb{R}$, i.e., $M_{w}(x) \equiv w(x)$ for all $x \in \Bb{R}$.

Introduce
\begin{equation}\label{e7}
   \mathcal{W}^{{ \rm{dens}}} (\Bb{R}) := \left\{
   w \in \mathcal{W}^{*} (\Bb{R}) \ | \ \mathcal{P} \ \mbox{is dense in}\  C^{\,0}_{w} (\Bb{R}) \right\}.
\end{equation}

In 1924 S.~Bernstein \cite{bern} asked for conditions on $w \in \mathcal{W}^{*} (\Bb{R})$
to be in $ \mathcal{W}^{{\rm{dens}}}(\Bb{R})$. This problem is known as {\emph{Bernstein's approximation problem}}.
Various results towards a final solution of Bernstein’'s approximation problem have been obtained independently by
L.~Carleson \cite{car}(1951), H.~Pol\-lard \cite{pol}(1953), S.~N.~Mer\-ge\-lyan \cite{m}(1958) and L.~de~Bran\-ges
\cite{db}(1959) (see also the surveys of P.~Koo\-sis \cite{k},  A.~Pol\-to\-rat\-ski \cite{polt} and M.~So\-din \cite{s}).

The solution of Bernstein's problem given by L.~de~Branges \cite{db} in 1959
was slightly improved in 1996  by M.~Sodin and P.~Yuditskii \cite{s1} and attained the following form.

Let $f$ be an entire function, $\Lambda_f$ be the set of all its zeros, $0 \leq r, \rho < \infty$ and
$\sigma_f (\rho) := {\overline{\lim}}_{r\to\infty} \ r^{-\rho} \, \log M_f (r) $, where $M_f (r) :=
\sup_{|z| = r} | f(z) |$. We say that $f$ is of {\it{minimal exponential type}} if $\sigma_f (1) = 0$.
Denote by $\mathcal{E}_0(\Bb{R})$  the family of all entire functions $f$ of minimal
exponential type which are  real on the real axis (in short: real) and  have only real simple zeros.

\begin{thmx}[L. de Branges, 1959 {\cite{db}}]\label{tha}
Let $w \in \mathcal{W}^{*} (\Bb{R} )  $. Then  $\mathcal{P}$ is not dense in $C^{\,0}_{w}(\Bb{R})$ if and only
if there exists an entire function $B \in \mathcal{E}_0 (\Bb{R}) $  such that
$\Lambda_{B} \subset S_w = \left\{x \in {\Bb{R}} \, | \, w (x) > 0   \right\}$ and
\begin{equation*}
\sum_{{\lambda \in \Lambda_B}  }  \ \frac{1}{w ( \lambda ) |B^{\,\prime } (\lambda )| } \ < \ + \infty \ .
\end{equation*}
\end{thmx}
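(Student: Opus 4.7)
My approach combines Hahn--Banach duality with the theory of entire functions of minimal exponential type.

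First, by Hahn--Banach, non-density of $\mathcal{P}$ in $C^{\,0}_{w}(\Bb{R})$ is equivalent to the existence of a nonzero bounded linear functional $\ell$ on $C^{\,0}_{w}(\Bb{R})$ with $\ell|_{\mathcal{P}}=0$. A standard Riesz--Markov-type identification represents every such $\ell$ as $\ell(f)=\int f\,d\mu$ for a nonzero finite signed Borel measure $\mu$ on $\Bb{R}$ supported on $\overline{S_{w}}$ and satisfying $\int w^{-1}\,d|\mu|<\infty$; the annihilation condition $\ell|_{\mathcal{P}}=0$ becomes the vanishing of every moment of $\mu$. Theorem~A thus reduces to an equivalence between the existence of such a $\mu$ and the existence of $B\in\mathcal{E}_{0}(\Bb{R})$ of the stated form.

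For the sufficiency direction, given $B$, I would set
\[
\mu := \sum_{\lambda\in\Lambda_{B}}\frac{1}{B'(\lambda)}\,\delta_{\lambda}.
\]
The hypothesis $\sum_{\lambda}(w(\lambda)|B'(\lambda)|)^{-1}<\infty$ makes $\mu$ a nonzero finite signed measure with $\int w^{-1}\,d|\mu|<\infty$, so $\ell(f):=\int f\,d\mu$ is bounded on $C^{\,0}_{w}(\Bb{R})$. The vanishing of polynomial moments $\sum_{\lambda}\lambda^{n}/B'(\lambda)=0$ then follows from the residue theorem applied to $z^{n}/B(z)$ on circles $|z|=R_{k}$, where $R_{k}\uparrow\infty$ is chosen (via Cartan's minimum-modulus estimate for entire functions of order at most one) so that $\min_{|z|=R_{k}}|B(z)|$ is not too small; the condition $\sigma_{B}(1)=0$ then forces the contour integrals to tend to zero, giving $\ell|_{\mathcal{P}}=0$ and hence non-density.

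For the necessity direction, starting from $\mu\neq 0$ with vanishing moments and $\int w^{-1}\,d|\mu|<\infty$, I would study the Cauchy transform
\[
F(z) := \int\frac{d\mu(t)}{t-z},
\]
holomorphic on $\Bb{C}\setminus\supp(\mu)$, symmetric under $z\mapsto\bar{z}$, and decaying faster than any $|z|^{-N}$ off the real line because every moment of $\mu$ vanishes. The key step is to factor $F=G/B$ with $B\in\mathcal{E}_{0}(\Bb{R})$ and $G$ entire of minimal exponential type having no zeros in $\Lambda_{B}$; then $\supp(\mu)\subset\Lambda_{B}$, the measure $\mu$ is purely atomic, the residues of $F$ at $\lambda\in\Lambda_{B}$ yield $\mu(\{\lambda\})=\pm 1/B'(\lambda)$, and $\int w^{-1}\,d|\mu|<\infty$ translates into the required summability $\sum 1/(w(\lambda)|B'(\lambda)|)<\infty$.

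\textbf{Main obstacle.} The delicate step is constructing $B$ of \emph{minimal} (not merely finite) exponential type in the necessity direction. This requires both showing that $\supp(\mu)$ is discrete and producing a canonical Hadamard--Krein product over this zero set whose exponential type is genuinely zero. I would rely on Phragm\'en--Lindel\"of arguments in the upper and lower half-planes to convert the super-polynomial decay of $F$ off $\Bb{R}$ into a type-zero bound on $B$, and on Carleman-type integral formulas relating the logarithmic growth of $|B|$ along $\Bb{R}$ to the zero-counting function of $\Lambda_{B}$.
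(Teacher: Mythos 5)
The paper does not prove Theorem~A. It is stated as a classical result, attributed to de~Branges~\cite{db} in the form sharpened by Sodin and Yuditskii~\cite{s1}, and the paper simply cites those sources. There is therefore no in-paper proof against which your sketch can be compared; the paper's own contributions begin with Lemma~\ref{lm1} and rely on Theorem~A as a black box.

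As for the sketch itself: the overall strategy you describe (Hahn--Banach duality to a signed measure $\mu$ with $\int w^{-1}\,d|\mu|<\infty$ annihilating $\mathcal{P}$, Cauchy transform $F(z)=\int d\mu(t)/(t-z)$, factorization $F=G/B$) is the standard route to de~Branges' theorem and is consistent with the expositions in Koosis~\cite{k} and Sodin--Yuditskii~\cite{s1}. The sufficiency direction is essentially sound: once you note that $|x|^{n}w(x)\to 0$ forces $\sum_{\lambda}|\lambda|^{n}/|B'(\lambda)|<\infty$ for every $n$, the functional $\ell(f)=\sum_\lambda f(\lambda)/B'(\lambda)$ is bounded and nonzero on $C^{\,0}_{w}(\Bb{R})$, and the vanishing of moments by contour integration (with radii chosen via a Cartan-type lower bound to avoid the zeros) is a standard, if technical, argument for $B$ of minimal exponential type. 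The necessity direction, however, is left almost entirely open: you correctly identify that producing $B\in\mathcal{E}_{0}(\Bb{R})$ from $F$ is the crux, but the claim that one can ``factor $F=G/B$'' is precisely the theorem, not a step toward it. The Phragm\'en--Lindel\"of and Carleman-formula remarks indicate the right toolbox but do not constitute a proof; in particular, they do not by themselves show that $\supp(\mu)$ is discrete, that the extremal annihilating measure is purely atomic, or that the resulting zero set supports a canonical product of genuinely zero exponential type. De~Branges' actual argument (and the Sodin--Yuditskii refinement) supplies this via extremal-problem and de~Branges-space structure, not by a direct manipulation of the Cauchy transform. So the plan is a reasonable high-level road map but would need the full machinery of~\cite{db} or~\cite{s1} to close the necessity gap.
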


In 1958 S.~Mergelyan \cite{m}  proved that if algebraic polynomials are dense in $ C^{\,0}_{w}(\Bb{R}) $ but are not
dense in $ C^{\,0}_{(1+x^{2})^{n} w}(\Bb{R}) $ for some positive integer $n$, then $w$ has countable support and
the number of points in the set $\{ x \in \Bb{R} \ | \ w (x) > 0  , \ |x|< R\}$ is $o(R)$ as $R \to +\infty$. Motivated
by this result, A.~Borichev and M.~Sodin in 1998 \cite{bs} divided Bernstein's approximation problem into two parts.

\begin{definition}{\rm{ Let
$ w \in \mathcal{W}^{*} (\Bb{R})$. It is said that algebraic polynomials $\mathcal{P}$ are {\emph{regularly dense}}
in $C^{\,0}_{w} (\Bb{R})$ if they are dense in $C^{\,0}_{(1+x^{2})^{n} w}(\Bb{R})$ for all $n \in \Bb{N}_{0}$.

Algebraic polynomials  $\mathcal{P}$ are called to be {\emph{singularly dense}} in $C^{\,0}_{w} (\Bb{R})$ if they are
dense in $C^{\,0}_{w}(\Bb{R})$ but not in $C^{\,0}_{(1+x^{2})^{n} w} (\Bb{R})$ for a certain $n \in \Bb{N}: = \{ 1, 2, ... \}$.}}
\end{definition}

Similarly to \eqref{e7}, we denote
\begin{eqnarray*}
   \mathcal{W}^{{\rm{reg}}} (\Bb{R}) &:=& \left\{
   w \in \mathcal{W}^{*} (\Bb{R}) \ | \ \mathcal{P} \ \mbox{is regularly dense in}\ C^{\,0}_{w} (\Bb{R}) \right\}, \\
      \mathcal{W}^{{\rm{sing}}} (\Bb{R}) &:=& \left\{
   w \in \mathcal{W}^{*} (\Bb{R}) \ | \ \mathcal{P}  \ \mbox{is singularly dense in}\ C^{\,0}_{w} (\Bb{R}) \right\}.
\end{eqnarray*}

It is obvious that $ \mathcal{W}^{{\rm{reg}}} (\Bb{R})$ and $\mathcal{W}^{{\rm{sing}}} (\Bb{R})$
are two non-intersecting classes of weights and
\begin{equation*}
  \mathcal{W}^{{\rm{dens}}} (\Bb{R}) =  \mathcal{W}^{{\rm{reg}}} (\Bb{R}) \sqcup \mathcal{W}^{{\rm{sing}}} (\Bb{R}) \ ,
\end{equation*}
where the symbol $\sqcup$ denotes the union of two non-intersecting sets.

Thus, the finding of conditions on a given weight $ w \in \mathcal{W}^{*} (\Bb{R})$ to be in $ \mathcal{W}^{{\rm{reg}}} (\Bb{R})$
or in $\mathcal{W}^{{\rm{sing}}} (\Bb{R})$  divides Bernstein's approximation problem into two independent parts:
regular and singular, respectively.
A complete solution of the singular part was given by A.~Borichev and M.~Sodin \cite{bs} in 1998.

\begin{thmx}\label{thb}
Let $w\in\mathcal{W}^{*}(\Bb{R})$. Algebraic polynomials $\mathcal{P}$ are singularly dense in $C^{\,0}_{w} (\Bb{R})$
if and only if $w$ is discrete and there exist an entire function $E \in \mathcal{E}_0(\Bb{R})$ and a nonnegative
integer $n$ such that
\begin{equation*}
  w(x) = \sum\limits_{\lambda \in \Lambda_{E}} w(\lambda) \, \chi_{\lambda }(x), \ \ x \in \Bb{R}, \ \ \ \   \chi_{\lambda}(x)
   :=  {{\left\{ \begin{array}{ll}
       0, &\ \ \textup{if} \ \ x \neq \lambda, \\
       1, &\ \ \textup{if} \ \ x = \lambda,
      \end{array}\right. }}
\end{equation*}
\begin{equation*}
\sum_{\lambda \in \Lambda_{E} } \frac{1}{(1+\lambda^{2})^{k} \ w(\lambda) \left| E^{\,\prime} (\lambda )\right|} \ \
\left\{
  \begin{array}{ll}
  < + \infty, & \ \ \textup{if} \ \  k = n +1, \\
  =   + \infty, & \ \ \textup{if} \ \  k = n,
  \end{array}\right.
\end{equation*}
and
\begin{equation*}
   \sum_{\lambda \in \Lambda_{F} } \frac{1}{w \left(\lambda\right) \left| F^{\,\prime} (\lambda )\right|} = + \infty
\end{equation*}
for arbitrary transcendental entire functions $F$ of minimal exponential type such that
$\Lambda_{F} \subset \Lambda_{E}$ and $E/F$ is transcendental.
\end{thmx}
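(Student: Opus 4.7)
The engine of the proof is Theorem A (de Branges): for a weight $v \in \mathcal{W}^{*}(\Bb{R})$, $\mathcal{P}$ fails to be dense in $C^{\,0}_{v}$ iff there exists $B\in\mathcal{E}_0(\Bb{R})$ with $\Lambda_B\subset S_v$ and $\sum_{\lambda\in\Lambda_B} 1/(v(\lambda)|B'(\lambda)|)<\infty$. Singular density of $\mathcal{P}$ in $C^{\,0}_{w}$ is thus exactly the assertion that such an obstruction exists for some weight $(1+x^2)^{n+1}w$ with $n\ge 0$ minimal, but not for $w$ itself. The plan is to read off all three conditions from this dichotomy, using Mergelyan's $o(R)$ counting estimate (quoted in the introduction) to supply the discreteness of $w$.

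For the necessity direction, assume $\mathcal{P}$ is singularly dense in $C^{\,0}_{w}$. Mergelyan's result forces $S_w$ to be countable with $o(R)$ points in $[-R,R]$, so $w$ is discrete. Let $n\ge 0$ be the largest integer with $\mathcal{P}$ dense in $C^{\,0}_{(1+x^2)^n w}$; this exists by the definition of singular density. Theorem A applied to $(1+x^2)^{n+1}w$ produces some $E_0\in\mathcal{E}_0(\Bb{R})$ with $\Lambda_{E_0}\subset S_w$ and the required convergence at $k=n+1$, while density in $C^{\,0}_{(1+x^2)^n w}$ forces divergence at $k=n$ for this same $E_0$. To upgrade $\Lambda_{E_0}$ to the full support $S_w$, I would multiply $E_0$ by an auxiliary genus-zero canonical product $H$ with simple zeros exactly on $S_w\setminus\Lambda_{E_0}$; Mergelyan sparsity guarantees $H\in\mathcal{E}_0(\Bb{R})$, and with $H$ chosen so that $|H(\lambda)|$ is controlled from below at $\lambda\in\Lambda_{E_0}$ and $|H'(\mu)|$ is controlled at new zeros $\mu$, the product $E:=E_0 H$ inherits both the convergence at $k=n+1$ and the divergence at $k=n$. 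Condition (iii) is then automatic: any transcendental $F$ violating it would, via Theorem A applied to $w$ itself, yield non-density in $C^{\,0}_{w}$, contradicting singular density.

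For sufficiency, assume (i)--(iii). Non-density in $C^{\,0}_{(1+x^2)^{n+1}w}$ is immediate from Theorem A and the convergence at $k=n+1$. For density in $C^{\,0}_{w}$, argue by contradiction: Theorem A supplies $B\in\mathcal{E}_0(\Bb{R})$ with $\Lambda_B\subset\Lambda_E$ and $\sum 1/(w(\lambda)|B'(\lambda)|)<\infty$. Since all zeros are simple, $E/B$ is entire and splits into two cases. If $E/B$ is transcendental, condition (iii) with $F=B$ forces divergence, contradicting convergence. If $E/B$ is a polynomial $P$, then $B=E/P$ and $|B'(\lambda)|=|E'(\lambda)|/|P(\lambda)|$ on $\Lambda_E\setminus\Lambda_P$; since $|P(\lambda)|$ is bounded below for $|\lambda|$ large, the series $\sum_{\lambda\in\Lambda_B} 1/(w(\lambda)|B'(\lambda)|)$ dominates a tail of $\sum 1/(w(\lambda)|E'(\lambda)|)$, which in turn dominates $\sum 1/((1+\lambda^2)^n w(\lambda)|E'(\lambda)|) = +\infty$ by (ii), a contradiction.

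I expect the main obstacle to lie in the enlargement step of necessity: constructing $H$ so that $E=E_0 H\in\mathcal{E}_0(\Bb{R})$ has $\Lambda_E=S_w$ \emph{and} preserves both the convergence at $k=n+1$ (which demands $|H(\lambda)|$ not be too small on $\Lambda_{E_0}$) and the divergence at $k=n$ (which requires the extra terms contributed by $\Lambda_H$ not to create a spurious convergence). Balancing these against the given weight $w$, with only the Mergelyan $o(R)$ estimate as a quantitative input, is the delicate step; the remainder of the argument is bookkeeping once Theorem A is invoked as a black box.
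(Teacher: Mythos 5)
The paper does not prove Theorem~\ref{thb}: it is quoted from Borichev and Sodin \cite{bs}, so there is no internal proof to compare your sketch against. Judged on its own terms, your sufficiency argument is in reasonable shape: the split according to whether $E/B$ is polynomial or transcendental is the right dichotomy, and the quotient $E/B$ is indeed in $\mathcal{E}_0(\Bb{R})$ since $\Lambda_E\setminus\Lambda_B$ inherits Lindel\"of's conditions from $\Lambda_E$ and $\Lambda_B$. (One caveat: Theorem~\ref{tha} as stated would be trivially satisfied by any polynomial $B$ with a finite zero set in $S_w$; the de~Branges obstruction must be read as a \emph{transcendental} $B$, which is also what makes your case split exhaustive.)

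The necessity direction is where the real gap lies, and it is more serious than you allow. Your plan is to take a de~Branges obstruction $E_0$ for $(1+x^2)^{n+1}w$ and multiply by an $H\in\mathcal{E}_0(\Bb{R})$ with zeros on $S_w\setminus\Lambda_{E_0}$ to force $\Lambda_E=S_w$. Two problems. First, your parenthetical worry is backwards: since all terms are positive, enlarging $\Lambda_{E}$ can never turn the divergent sum at $k=n$ into a convergent one; the danger is entirely on the $k=n+1$ side, where the new terms $1/\bigl((1+\mu^2)^{n+1}w(\mu)|E'(\mu)|\bigr)$ for $\mu\in\Lambda_H$ and the inflation of existing terms by $1/|H(\lambda)|$ can destroy convergence, and there is nothing in Mergelyan's $o(R)$ estimate alone that controls $w(\mu)$ from below, $|E_0(\mu)|$ from below, or the proximity of $\Lambda_H$ to $\Lambda_{E_0}$. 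Second, even to form a genus-zero canonical product over $S_w\setminus\Lambda_{E_0}$ that lies in $\mathcal{E}_0(\Bb{R})$, Lindel\"of's criterion requires not just $n(R)=o(R)$ but also the convergence of $\sum_{|\lambda|<R}1/\lambda$ over that set; this symmetric-sum condition does not follow from the counting estimate. These are precisely the obstacles that make the Borichev--Sodin proof substantially harder than a direct application of Theorem~\ref{tha}; their argument runs through the Hamburger moment problem and the structure of de~Branges spaces rather than through a pointwise enlargement of the zero set, and your sketch does not supply a substitute for that machinery.
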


The regular part of Bernstein's approximation problem is still open but the following important result holds.
\begin{thmx}[M.~Sodin, 1996 \cite{s}]\label{thc}
 If $w\in \mathcal{W}^{{\rm{reg}}} (\Bb{R})$, then
$w (x) + \mathrm{e}^{- \delta |x|}\in \mathcal{W}^{{\rm{reg}}}(\Bb{R}) $ for every $\delta > 0$.
\end{thmx}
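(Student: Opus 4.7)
\emph{Plan.} The plan is to argue by contradiction using Theorem~\ref{tha}. Put $W_\delta(x):=w(x)+\mathrm{e}^{-\delta|x|}$ and suppose $W_\delta\notin\mathcal{W}^{{\rm{reg}}}(\Bb{R})$. Then for some $n\in\Bb{N}_0$ algebraic polynomials fail to be dense in $C^{\,0}_{(1+x^2)^n W_\delta}(\Bb{R})$; since the support of $(1+x^2)^n W_\delta$ is all of $\Bb{R}$, Theorem~\ref{tha} supplies a function $B\in\mathcal{E}_0(\Bb{R})$ with
\begin{equation*}
   \sum_{\lambda\in\Lambda_B}\frac{1}{(1+\lambda^{2})^n\,(w(\lambda)+\mathrm{e}^{-\delta|\lambda|})\,|B^{\,\prime}(\lambda)|}<+\infty.
\end{equation*}
Using $a+b\le 2\max(a,b)$ I would partition $\Lambda_B=\Lambda_1\sqcup\Lambda_2$ with
\[
   \Lambda_1:=\{\lambda\in\Lambda_B:w(\lambda)\ge\mathrm{e}^{-\delta|\lambda|}\}\subset S_w,\qquad \Lambda_2:=\Lambda_B\setminus\Lambda_1,
\]
so that both
\begin{equation*}
   \sum_{\lambda\in\Lambda_1}\frac{1}{(1+\lambda^{2})^n\,w(\lambda)\,|B^{\,\prime}(\lambda)|}<+\infty \quad\text{and}\quad
   \sum_{\lambda\in\Lambda_2}\frac{\mathrm{e}^{\delta|\lambda|}}{(1+\lambda^{2})^n\,|B^{\,\prime}(\lambda)|}<+\infty.
\end{equation*}

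The crucial next step is to show that $\Lambda_2$ is finite. Since $B$ has minimal exponential type, Cauchy's integral formula on the unit disc centered at $\lambda$ yields $|B^{\,\prime}(\lambda)|\le C_{\delta/2}\,\mathrm{e}^{(\delta/2)|\lambda|}$ for some $C_{\delta/2}>0$. Substituting this into the second sum above forces
\[
   \sum_{\lambda\in\Lambda_2}\frac{\mathrm{e}^{(\delta/2)|\lambda|}}{(1+\lambda^{2})^n}<+\infty,
\]
whose general term tends to $+\infty$ as $|\lambda|\to\infty$. Because $\Lambda_B$ is discrete in $\Bb{R}$, this can only occur if $\Lambda_2$ is finite.

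Set $N:=\#\Lambda_2$, $P(z):=\prod_{\mu\in\Lambda_2}(z-\mu)$, and $\tilde B(z):=B(z)/P(z)$. Since the zeros of $B$ at $\Lambda_2$ are simple, $\tilde B$ is entire; it is real on $\Bb{R}$, transcendental (as $B$ is), has simple real zeros $\Lambda_{\tilde B}=\Lambda_1\subset S_w$, and keeps minimal exponential type (division by a polynomial of degree $N$ shifts $\log M_{\tilde B}(r)$ by $-N\log r+O(1)$, which vanishes after division by $r$). Hence $\tilde B\in\mathcal{E}_0(\Bb{R})$. At each $\lambda\in\Lambda_1$ one has $\tilde B^{\,\prime}(\lambda)=B^{\,\prime}(\lambda)/P(\lambda)$, and the polynomial bound $|P(\lambda)|\le C'(1+\lambda^{2})^{N/2}$ gives
\begin{equation*}
   \sum_{\lambda\in\Lambda_1}\frac{1}{(1+\lambda^{2})^{n+N}\,w(\lambda)\,|\tilde B^{\,\prime}(\lambda)|}
   =\sum_{\lambda\in\Lambda_1}\frac{|P(\lambda)|}{(1+\lambda^{2})^{n+N}\,w(\lambda)\,|B^{\,\prime}(\lambda)|}
   \le C'\sum_{\lambda\in\Lambda_1}\frac{1}{(1+\lambda^{2})^n\,w(\lambda)\,|B^{\,\prime}(\lambda)|}<+\infty.
\end{equation*}
By Theorem~\ref{tha} applied to the weight $(1+x^2)^{n+N}w$, polynomials fail to be dense in $C^{\,0}_{(1+x^2)^{n+N}w}(\Bb{R})$, contradicting $w\in\mathcal{W}^{{\rm{reg}}}(\Bb{R})$.

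I expect the main obstacle to be the finiteness of $\Lambda_2$: this is precisely where the specific decay rate $\mathrm{e}^{-\delta|\cdot|}$ of the added summand must beat the subexponential growth of $|B^{\,\prime}|$ coming from the minimal exponential type of $B$. The remainder---extracting a finite polynomial factor from $B$, checking that $\tilde B$ still lies in $\mathcal{E}_0(\Bb{R})$, and absorbing the polynomial correction $|P|$ into a slightly higher power of $1+x^2$---is routine bookkeeping once that step is in place.
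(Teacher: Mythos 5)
The paper cites Theorem~\ref{thc} (from Sodin, 1996) as a known result and supplies no proof of it, so there is no internal argument to compare against; I evaluate your proposal on its own merits.

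Your proof is correct and self-contained. The decisive step is the finiteness of $\Lambda_2$, and your argument for it is exactly right: because $B$ is of minimal exponential type one has, for any $\eta>0$, $|B(z)|\le C_\eta\,\mathrm{e}^{\eta|z|}$, and a Cauchy estimate on a disc of radius $1$ around $\lambda$ (or the paper's Lemma~\ref{lem2}, applied with $\eta=\delta/2$ after reducing to small $\delta$, as that lemma assumes $\eta<1/(2\mathrm{e})$) gives $|B^{\,\prime}(\lambda)|\le C\,\mathrm{e}^{(\delta/2)|\lambda|}$. This is genuinely too slow to cancel the $\mathrm{e}^{\delta|\lambda|}$ in the $\Lambda_2$-sum, and since $\Lambda_B$ is a discrete real set, convergence of a series whose general term tends to $+\infty$ along any unbounded subsequence forces $\Lambda_2$ to be finite. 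The remaining bookkeeping is also correct: at $\lambda\in\Lambda_1$ the simple-zero identity $\tilde B^{\,\prime}(\lambda)=B^{\,\prime}(\lambda)/P(\lambda)$ holds, $|P(\lambda)|\le C'(1+\lambda^2)^{N/2}$, and $\tilde B$ stays in $\mathcal{E}_0(\Bb{R})$ because division by a polynomial does not change the exponential type; together with $\Lambda_{\tilde B}=\Lambda_1\subset S_w=S_{(1+x^2)^{n+N}w}$ this gives the de~Branges divisor condition for $(1+x^2)^{n+N}w$ and the desired contradiction. Two small points worth recording but not gaps: Theorem~\ref{tha} is implicitly about transcendental $B$ with infinitely many zeros (otherwise the divisor sum is trivially finite), and your construction preserves this since $\Lambda_1=\Lambda_B\setminus\Lambda_2$ remains infinite once $\Lambda_2$ is finite; and the monotonicity remark — that the result for small $\delta$ implies it for larger $\delta$, because $\mathrm{e}^{-\delta_1|x|}\le\mathrm{e}^{-\delta_0|x|}$ for $\delta_1\ge\delta_0$ — justifies restricting to $\delta$ small enough to invoke Lemma~\ref{lem2} verbatim, though your direct Cauchy estimate already works for all $\delta>0$.
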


The following statement about perturbations of zeros of an entire function was proved in \cite[Lemma 5, p.~237]{b5} (2005).

\begin{lemx}\label{lma}
For an arbitrary entire function $B \in \mathcal{E}_{0} (\Bb{R})$ with zeros $\Lambda_B = \left\{ b_n \right\}_{n \geq 1}$
there exists a constant $C>0$ and a sequence of real positive numbers $\left\{\delta_n \right\}_{n \geq 1}$ such
that for any sequence of real numbers $\left\{d_n \right\}_{n \geq 1}$ satisfying
\begin{equation*}
|b_n - d_n| \leq \delta_n, \quad \ n \geq 1, \
\end{equation*}
one can find an entire function $D \in \mathcal{E}_{0}(\Bb{R})$ such that $\Lambda_D = \left\{d_n\right\}_{n \geq 1}$ and
\begin{equation*}
|B^{\,\prime}(b_n )| \leq C \cdot |D^{\,\prime }(d_n )|, \quad n \geq 1.
\end{equation*}
\end{lemx}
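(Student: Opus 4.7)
The plan is to construct $D$ directly from the Hadamard factorization of $B$ by replacing each zero $b_n$ by its perturbed value $d_n$, and to choose the tolerance $\delta_n$ so small that the resulting canonical product defines an entire function in $\mathcal{E}_0(\Bb{R})$ with $|D'(d_n)/B'(b_n)|$ bounded away from zero uniformly in $n$. Since $B$ has order at most one and minimal type, its canonical factorization reads
\begin{equation*}
B(z)=c\,z^{m}\mathrm{e}^{\alpha z}\prod_{n\geq 1}\Bigl(1-\frac{z}{b_n}\Bigr)\mathrm{e}^{z/b_n},\qquad c,\alpha\in\Bb{R},\ b_n\in\Bb{R}\setminus\{0\},
\end{equation*}
and the candidate is the same expression with $d_n$ in place of $b_n$.

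I would fix $\delta_n>0$ satisfying (i) $\delta_n\leq\tfrac14\min\bigl(1,|b_n|,\min_{k\ne n}|b_n-b_k|\bigr)$, so that any admissible sequence $\{d_n\}$ consists of pairwise distinct nonzero reals, $|d_n|$ is comparable to $|b_n|$, and $|1-d_n/d_k|$ is comparable to $|1-b_n/b_k|$; (ii) a summability bound of the form $\sum_n\delta_n/|b_n|^{2}<\infty$, which together with $\sum_n 1/|b_n|^{2}<\infty$ (from the genus of $B$) gives locally uniform convergence of the canonical product for $D$ to an entire function that is real on $\Bb{R}$ with simple real zeros $\{d_n\}$; and (iii) an additional stringent geometric decay $\delta_n\leq 2^{-n}\inf_{k\ne n}\bigl(|b_n-b_k|/(1+|b_k|)^{2}\bigr)$, to be exploited in Step~3. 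A direct estimate of $\log|D(z)/B(z)|$ via the canonical integral of the difference of the zero counting functions gives $\log|D(z)/B(z)|=o(|z|)$, so $D\in\mathcal{E}_0(\Bb{R})$.

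Next I would read off the derivatives from the product representations. Only the $n$-th factor vanishes at the corresponding zero, and a direct computation yields
\begin{equation*}
\frac{D'(d_n)}{B'(b_n)}=\Bigl(\frac{d_n}{b_n}\Bigr)^{\!m-1}\!\mathrm{e}^{\alpha(d_n-b_n)}\cdot\Pi_n,\qquad \Pi_n:=\prod_{k\ne n}\frac{(1-d_n/d_k)\,\mathrm{e}^{d_n/d_k}}{(1-b_n/b_k)\,\mathrm{e}^{b_n/b_k}}.
\end{equation*}
The prefactor is bounded above and below by absolute constants by condition (i). For $\Pi_n$, the identity $\log(1-x)+x=-\sum_{j\geq 2}x^{j}/j$ applied termwise gives
\begin{equation*}
\log\Pi_n=-\sum_{k\ne n}\sum_{j\geq 2}\frac{(d_n/d_k)^{j}-(b_n/b_k)^{j}}{j},
\end{equation*}
whose inner increment is $O\bigl(\delta_n/|b_k|^{2}+|b_n|\delta_k/|b_k|^{3}\bigr)$ in the regime $|b_k|\gg|b_n|$.

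The main obstacle is bounding $\log|\Pi_n|$ uniformly in $n$. The tail $|b_k|\geq 2|b_n|$ is controlled by (ii); the delicate terms are those with $|b_k|$ close to $|b_n|$, where the denominator $1-b_n/b_k\sim(b_k-b_n)/b_k$ can be very small, so $\delta_n+\delta_k$ must be much smaller than every gap $|b_k-b_n|$. Condition (iii) is designed exactly for this: the logarithm of the $k$-th factor of $\Pi_n$ becomes $O(2^{-\max(n,k)})$, so the sum over $k\ne n$ of these logarithms is bounded uniformly in $n$. This yields $|\Pi_n|\geq 1/C$ for some absolute $C>0$, and hence $|B'(b_n)|\leq C\,|D'(d_n)|$, completing the proof.
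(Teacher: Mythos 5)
Your overall strategy---perturb the canonical product and then bound $D'(d_n)/B'(b_n)$ factor by factor, making $\delta_n$ decay fast enough that the product over $k$ is controlled uniformly in $n$---is the right one, and it is essentially the approach behind \cite[Lemma 5]{b5} and the paper's refinement Lemma~\ref{lm1}. But there are several places where your specific choice of $\delta_n$ fails to deliver the uniform bound. The most glaring is condition (iii): since $|b_k|\to\infty$ along the zero set, the quantity $|b_n-b_k|/(1+|b_k|)^2$ is of order $1/|b_k|\to 0$ as $k\to\infty$, so $\inf_{k\ne n}\bigl(|b_n-b_k|/(1+|b_k|)^{2}\bigr)=0$ and (iii) forces $\delta_n=0$. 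Even replacing (iii) by, say, $\delta_n\le 2^{-n}\min_{k\ne n}|b_n-b_k|$, the claimed estimate $O(2^{-\max(n,k)})$ for the $k$-th log-factor of $\Pi_n$ is not what the computation gives: the natural bound on the near terms is $(\delta_n+\delta_k)/|b_n-b_k|\lesssim 2^{-n}+2^{-k}$, and $\sum_{k\ne n}2^{-n}$ diverges because infinitely many $k$ contribute the same amount $2^{-n}$. In the tail regime $|b_k|\ge 2|b_n|$, a correct expansion of your inner increment gives a leading contribution of size $(b_n/b_k)^2\bigl(\delta_n/|b_n|+\delta_k/|b_k|\bigr)$, and summing over $k$ produces a term $|b_n|^2\sum_k\delta_k/|b_k|^3\le \tfrac12|b_n|\sum_k\delta_k/|b_k|^2$; condition (ii) makes this finite for each $n$ but does not prevent it from growing like $|b_n|$, so the resulting bound on $\log|\Pi_n|$ is not uniform in $n$.

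The missing idea is to arrange the bound on the $k$-th factor so that it depends \emph{only} on $k$ and is multiplicatively summable, so that $\sup_n|\log\Pi_n|$ is controlled by a single convergent product. This is exactly how the paper handles the analogous step in the proof of Lemma~\ref{lm1}: the elementary Lemma~\ref{lem1} gives the per-factor bound $\bigl|(1-d_{\lambda_0}/\lambda)/(1-d_{\lambda_0}/d_\lambda)\bigr|\le(1+\Delta_\lambda)^2$ with $\Delta_\lambda$ independent of $\lambda_0$ and $\sum_\lambda\Delta_\lambda<\infty$, after which the uniform bound~\eqref{plm26} is immediate. (There the separation between zeros is supplied by the extra hypothesis \eqref{8j} via Lemma~\ref{lem3}; in the general Lemma~A one instead builds the needed geometric smallness directly into the recursive choice of $\delta_n$, as in \cite{b5}.) One further caution: in the genus-one case $\sum 1/|b_n|=\infty$, keeping the same linear exponent $\alpha$ in the Hadamard factorization of $D$ does not obviously yield a function of \emph{minimal} exponential type, since $\alpha$ is tied to the asymptotics of the symmetric partial sums of $1/b_n$; the paper sidesteps this by working with the Lindel\"of symmetric product $\lim_{R\to\infty}\prod_{|\lambda|<R}(1-z/\lambda)$ rather than the genus-one Hadamard form, and I would recommend you do the same.
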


If the set of real numbers $\{|B^{\,\prime} (b_n )|\}_{n\geq 1}$ in Lemma~\ref{lma} is bounded from below,
then the result of Lemma~\ref{lma} can be improved as follows.

\begin{lemma}\label{lm1}
Let $B \in \mathcal{E}_{0} (\Bb{R})$  and  $\Lambda_{B}$ denote the set of its zeros. Assume that
\begin{equation}\label{8j}
  \sum_{\lambda \in \Lambda_{B}} \frac{1}{ \left|B^{\,\prime} (\lambda)\right|} < \infty.
\end{equation}
Then, for arbitrary $\delta > 0$ there exist constants $C_{\delta} = C_{\delta}(B), \rho_{\delta}=\rho_{\delta}(B)  > 0$
such that for any set of real numbers $\left\{d_{\lambda}\right\}_{\lambda \in \Lambda_{B} }$ satisfying
\begin{equation}\label{9j}
   \left|\lambda - d_{\lambda} \right| \leq \rho_{\delta} \, \mathrm{e}^{{{-\delta |\lambda|}}}, \ \ \lambda \in \Lambda_{B},
\end{equation}
one can find an entire function $D \in \mathcal{E}_{0}(\Bb{R})$ such that
$\Lambda_{D}  = \left\{d_{\lambda}\right\}_{\lambda \in \Lambda_{B}}$ and
\begin{equation}\label{11j}
   \left|B^{\,\prime}(\lambda)\right| \leq C_{\delta} \, \left|D^{\,\prime}(d_{\lambda})\right|,\ \ \lambda \in \Lambda_{B}.
\end{equation}
\end{lemma}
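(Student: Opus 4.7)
\emph{Plan.} The natural candidate is
$$D(z) \;:=\; B(z)\,\prod_{\mu \,\in\, \Lambda_{B}}\frac{z - d_\mu}{z - \mu},$$
and I would verify in order that (i)~the product converges and $D$ is entire, real on $\Bb{R}$, with $\Lambda_{D}=\{d_\mu\}_{\mu\in\Lambda_{B}}$ consisting of simple zeros; (ii)~$D$ is of minimal exponential type; and (iii)~the bound \eqref{11j} holds.

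For (i), minimal exponential type of $B$ forces the counting function $n_{B}(r):=\#\{\mu\in\Lambda_{B}:|\mu|\leq r\}$ to be $o(r)$, hence $\sum_{\mu}\mathrm{e}^{-\delta|\mu|}<\infty$. Together with \eqref{9j} this makes $\sum_{\mu}|\mu-d_\mu|/|z-\mu|$ locally uniformly convergent off $\Lambda_{B}$, so the product is holomorphic there; the simple poles at each $\mu$ are canceled by the zeros of $B$, so $D$ extends to an entire function, real on $\Bb{R}$. For pairwise distinctness of $\{d_\mu\}$ I would extract from \eqref{8j} the separation bound
$$\min_{\nu\,\in\,\Lambda_{B}\setminus\{\mu\}}|\nu-\mu|\;\geq\;c_\epsilon\,\mathrm{e}^{-\epsilon|\mu|},\qquad \mu\in\Lambda_{B},$$
valid for every $\epsilon>0$ with $c_\epsilon=c_\epsilon(B)>0$: writing $B(z)=(z-\mu)(z-\nu)\widetilde B(z)$ for $\nu$ a nearest zero to $\mu$, the factor $\widetilde B$ is still of minimal exponential type, so $|B'(\mu)|=|\mu-\nu|\,|\widetilde B(\mu)|\leq|\mu-\nu|\,\mathrm{e}^{\epsilon|\mu|}$ eventually, and since \eqref{8j} forces $|B'(\mu)|\geq 1$ for $|\mu|$ large, the separation follows. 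Choosing $\rho_\delta<c_{\delta/4}/3$ then keeps $\{d_\mu\}$ pairwise distinct, so every zero of $D$ is simple.

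For (ii), on the circle $|z|=r$ outside the union of disks $\bigcup_{\mu}\{|w-\mu|<\mathrm{e}^{-(\delta/2)|\mu|}\}$, the bound $|\log|1-(d_\mu-\mu)/(z-\mu)||\leq 2\rho_\delta\,\mathrm{e}^{-(\delta/2)|\mu|}$ (from \eqref{9j}) together with $\sum_\mu\mathrm{e}^{-(\delta/2)|\mu|}<\infty$ yields $|\log|D(z)/B(z)||=O(1)$ there. Since $D$ is entire, applying the maximum principle on each (bounded-diameter) disk extends this to $\log M_D(r)\leq\log M_B(r+1)+O(1)=o(r)$, so $D\in\mathcal{E}_{0}(\Bb{R})$.

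For (iii), factoring $(z-d_\lambda)$ from the product gives
$$D'(d_\lambda) \;=\; B_\lambda(d_\lambda)\,\prod_{\mu\neq\lambda}\frac{d_\lambda-d_\mu}{d_\lambda-\mu},\qquad B_\lambda(z):=\frac{B(z)}{z-\lambda},$$
with $B_\lambda(\lambda)=B'(\lambda)$. A Cauchy estimate of $B_\lambda(d_\lambda)-B_\lambda(\lambda)$ on the disk of radius $r=\tfrac12 c_{\delta/4}\,\mathrm{e}^{-(\delta/4)|\lambda|}$ around $\lambda$ (on which $|B_\lambda|\leq M_B(|\lambda|+1)/r=\mathrm{e}^{(\delta/4)|\lambda|+o(|\lambda|)}$), combined with \eqref{9j}, produces
$$|B_\lambda(d_\lambda)-B'(\lambda)|\;\leq\;C(B,\delta)\,\rho_\delta\,\mathrm{e}^{-(\delta/2)|\lambda|+o(|\lambda|)},$$
which is $\leq 1/2\leq|B'(\lambda)|/2$ (using $|B'(\lambda)|\geq 1$ for large $|\lambda|$) once $\rho_\delta$ is small; hence $|B_\lambda(d_\lambda)|\geq|B'(\lambda)|/2$. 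The remaining product equals $\prod_{\mu\neq\lambda}(1-(d_\mu-\mu)/(d_\lambda-\mu))$, and its $\log$-modulus is bounded by $2\sum_{\mu\neq\lambda}|d_\mu-\mu|/|d_\lambda-\mu|$; splitting at $|d_\lambda-\mu|\geq 1$ versus $<1$ and using the separation bound for the near terms and \eqref{9j} for the far ones gives a uniform constant, whence \eqref{11j}. The heart of the argument is converting \eqref{8j} into the quantitative separation and tuning $\epsilon\approx\delta/4$ so that the Cauchy estimate absorbs the perturbation rate $\mathrm{e}^{-\delta|\lambda|}$; once that balance is in place, the remaining estimates are routine entire-function calculations.
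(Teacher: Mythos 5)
Your overall strategy matches the paper's: define $D$ by replacing each zero $\lambda$ of $B$ by $d_\lambda$, derive a separation bound for $\Lambda_{B}$ from \eqref{8j}, show $|B_\lambda(d_\lambda)|\geq|B'(\lambda)|/2$ by a Cauchy estimate, and compare the two canonical products factor by factor. (The paper normalizes $B$ so that $0\notin\Lambda_B$ and then invokes Lindel\"of's theorem to get $D\in\mathcal{E}_0(\Bb{R})$ directly from the zero counting, where you instead estimate $\log|D/B|$ off small disks; both are fine.)

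There is, however, one step in your separation argument that does not hold as written. You factor $B(z)=(z-\mu)(z-\nu)\widetilde B(z)$ with $\nu$ nearest to $\mu$ and assert that since $\widetilde B$ is of minimal exponential type, $|\widetilde B(\mu)|\leq \mathrm{e}^{\epsilon|\mu|}$ ``eventually.'' But $\widetilde B$ depends on $\mu$: it is a different entire function for each $\mu$, so ``minimal type $\Rightarrow$ eventual bound'' gives a threshold $R_\epsilon(\widetilde B)$ that also depends on $\mu$, and there is no reason $|\mu|$ exceeds its own threshold. What is actually needed, and what the paper's Lemma~\ref{lem2} provides, is a bound that is \emph{uniform} in the removed zero: one first shows by the maximum principle that $|B(z)/(z-\nu)|\leq C_\epsilon \mathrm{e}^{\epsilon|z|}$ for all $z\in\Bb{C}$ and all $\nu\in\Lambda_B$ simultaneously (with $C_\epsilon=\sup_z \mathrm{e}^{-\epsilon|z|}|B(z)|$), and then a Cauchy estimate gives $|\widetilde B(\mu)|=|(B/(\cdot-\nu))'(\mu)|\leq C_\epsilon\mathrm{e}^{\epsilon|\mu|}$ for every $\mu$. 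Combined with $|B'(\mu)|\geq\Theta_B^{-1}$ from \eqref{8j} this yields the separation $|\mu-\nu|\geq c_\epsilon\mathrm{e}^{-\epsilon|\mu|}$ with an explicit $c_\epsilon$. Once you replace the ``eventually'' step by this uniform max-modulus estimate (which also supplies the clean bound $|B_\lambda|\leq C_\epsilon\mathrm{e}^{\epsilon|z|}$ you need later), the remainder of your argument goes through and matches the paper's.
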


\bigskip Observe that the proof of Lemma~\ref{lm1}  in Section~\ref{lemma1} gives the explicit expressions for the constants $\rho_{\delta}$ and $C_{\delta}$
in \eqref{9j} and in \eqref{11j}.
Lemma~\ref{lm1} is instrumental for the proof of the next statement.
\begin{lemma}\label{lm2}
 Let $\varepsilon > 0$ and $w\in \mathcal{W}^{{\rm{reg}}} (\Bb{R})$.
Then,
\begin{equation}\label{th1f1}
  w_{\varepsilon}(x) :=   \sup_{|t| \le \mathrm{e}^{-\varepsilon |x|}}\left( w (x + t) +
  \mathrm{e}^{-\varepsilon |x+t|}\right) \in \mathcal{W}^{{\rm{reg}}} (\Bb{R}).
\end{equation}
\end{lemma}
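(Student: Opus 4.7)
The plan is to argue by contradiction via Theorem~A, transferring a hypothetical failure of regular denseness of $w_\varepsilon$ to the auxiliary weight
$$u(x) := w(x) + \mathrm{e}^{-\varepsilon|x|},$$
which lies in $\mathcal{W}^{{\rm{reg}}}(\Bb{R})$ by Theorem~C. Note that $w_\varepsilon(x) = \sup_{|t|\le \mathrm{e}^{-\varepsilon|x|}}u(x+t)$, so $w_\varepsilon(x)\ge\mathrm{e}^{-\varepsilon|x|}>0$ for every $x$. I would first check that $w_\varepsilon\in\mathcal{W}^{*}(\Bb{R})$: it is bounded (since $w$ is), strictly positive, satisfies $|x|^{m}w_\varepsilon(x)\to 0$ for every $m\in\Bb{N}_0$ (the decay of $w$ survives displacements of size at most $1$, and $\mathrm{e}^{-\varepsilon|x+t|}\le \mathrm{e}^{\varepsilon}\,\mathrm{e}^{-\varepsilon|x|}$ for $|t|\le \mathrm{e}^{-\varepsilon|x|}$), and is upper semi-continuous by the standard fact that the supremum of an upper semi-continuous function over a family of compact intervals with continuously varying endpoints is itself upper semi-continuous.

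Suppose now $w_\varepsilon\notin\mathcal{W}^{{\rm{reg}}}(\Bb{R})$. Theorem~A then produces an integer $n\ge 0$ and $B\in\mathcal{E}_0(\Bb{R})$ with $\Lambda_B\subset S_{w_\varepsilon}=\Bb{R}$ satisfying
$$\sum_{\lambda\in\Lambda_B}\frac{1}{(1+\lambda^{2})^{n}\,w_{\varepsilon}(\lambda)\,|B^{\prime}(\lambda)|}<\infty.$$
Since $(1+x^{2})^{n}w_{\varepsilon}(x)$ is an upper semi-continuous function that tends to $0$ at infinity, it is bounded above on $\Bb{R}$, so its reciprocal is bounded below by a positive constant; in particular $\sum_{\lambda}1/|B^{\prime}(\lambda)|<\infty$ and Lemma~\ref{lm1} applies to $B$. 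For each $\lambda\in\Lambda_B$, upper semi-continuity of $u$ on the compact interval $[\lambda-\mathrm{e}^{-\varepsilon|\lambda|},\lambda+\mathrm{e}^{-\varepsilon|\lambda|}]$ provides a point $y_\lambda$ in it at which $u(y_\lambda) = w_\varepsilon(\lambda)$.

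The key step is to reconcile the tolerance $\rho_\delta\,\mathrm{e}^{-\delta|\lambda|}$ offered by Lemma~\ref{lm1} with the displacement scale $\mathrm{e}^{-\varepsilon|\lambda|}$ inherited from the definition of $w_\varepsilon$. I would invoke Lemma~\ref{lm1} with $\delta:=\varepsilon/2$, obtaining constants $\rho=\rho_{\varepsilon/2}(B)$ and $C=C_{\varepsilon/2}(B)$, and set $d_\lambda:=y_\lambda$ for every $\lambda$ with $|\lambda|$ so large that $\mathrm{e}^{-\varepsilon|\lambda|}\le\rho\,\mathrm{e}^{-(\varepsilon/2)|\lambda|}$, and $d_\lambda:=\lambda$ for the finitely many remaining zeros. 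The hypotheses of Lemma~\ref{lm1} are then satisfied, so it yields $D\in\mathcal{E}_0(\Bb{R})$ with $\Lambda_D=\{d_\lambda\}$ and $|B^{\prime}(\lambda)|\le C\,|D^{\prime}(d_\lambda)|$. Combining $(1+d_\lambda^{2})^{n}\ge\tfrac{1}{2}(1+\lambda^{2})^{n}$ for large $|\lambda|$ with $u(d_\lambda)=w_\varepsilon(\lambda)$, a term-by-term comparison produces
$$\sum_{d\in\Lambda_D}\frac{1}{(1+d^{2})^{n}\,u(d)\,|D^{\prime}(d)|}<\infty,$$
which by Theorem~A contradicts $u\in\mathcal{W}^{{\rm{reg}}}(\Bb{R})$. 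The main obstacle is precisely this alignment: the choice $\delta=\varepsilon/2$ trades a small loss in the decay exponent for the exponential room $\mathrm{e}^{-(\varepsilon/2)|\lambda|}$ needed to absorb $\rho$, and only after this manoeuvre can the finitely many exceptional zeros be handled trivially.
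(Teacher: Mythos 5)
Your argument is essentially the same as the paper's: introduce $u=\beta_\varepsilon=w+\mathrm{e}^{-\varepsilon|\cdot|}$, observe $w_\varepsilon$ is the running sup of $u$ over shrinking intervals, invoke Theorem~C for regularity of $u$, and, after Theorem~A produces $B$ and $n$, apply Lemma~\ref{lm1} with $\delta=\varepsilon/2$ to shift each zero $\lambda$ to the maximizer $y_\lambda$ of $u$ in the allowed interval (leaving finitely many small zeros fixed), thereby transferring the divergence criterion to $u$ and deriving a contradiction. The paper spells out the upper semi-continuity of $w_\varepsilon$ via an explicit sequence argument and writes the maximizer as $\lambda+\theta_\varepsilon(\lambda)\mathrm{e}^{-\varepsilon|\lambda|}$, but these are only cosmetic differences from what you propose.
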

\begin{proof}
In view of \cite[Example 1, p.~8]{h},  the function
\begin{equation}\label{plem81za}
 \beta_{\varepsilon}(x):= w (x) + \mathrm{e}^{-\varepsilon |x|}
\end{equation}
is upper semi-continuous on $\Bb{R}$ and an application of \cite[Theorem 1.2, p.~4]{h}
to the supremum in \eqref{th1f1} yields for each $x\in \Bb{R}$ the existence of  $\theta_{\varepsilon} (x)\in [-1,1]$ such that
\begin{equation}\label{plem81zb}
    w_{\varepsilon} (x) = \beta_{\varepsilon} \left(x + \theta_{\varepsilon} (x)\mathrm{e}^{-\varepsilon |x|} \right),\ \ \ x\in \Bb{R}.
\end{equation}

To prove $w_{\varepsilon} \in \mathcal{W}^{*} (\Bb{R})$, let $x_{0}\in \Bb{R}$, $\{x_{n}\}_{n \geq 1} \subset \Bb{R}$, $\lim_{n\to \infty} x_{n} =x_{0} $, $\Bb{N}_{1} := \{n~\geq~1 \ | \ |\theta_{\varepsilon} (x_{n})|\mathrm{e}^{-\varepsilon |x_{n}|} > \mathrm{e}^{-\varepsilon |x_{0}|}\, \}$ and let us choose
an infinite sequence $\Bb{N}_{2} :=\{n_{k}\}_{k\geq 1}\subset \Bb{N}$  such that $ \limsp_{n\to \infty} w_{\varepsilon} (x_{n}) = \lim_{k\to \infty} w_{\varepsilon} (x_{n_{k}})$. Since for every $n \in \Bb{N}\setminus \Bb{N}_{1}$ we have $|\theta_{\varepsilon} (x_{n})|\mathrm{e}^{-\varepsilon |x_{n}|} \leq \mathrm{e}^{-\varepsilon |x_{0}|}$,  \eqref{plem81zb} and \eqref{th1f1} yield  $w_{\varepsilon} (x_{n}) \leq w_{\varepsilon} (x_{0})$, $n  \in \Bb{N}\setminus \Bb{N}_{1}$. Thus, if
$\Bb{N}_{2} \cap (\Bb{N}\setminus \Bb{N}_{1})$ is infinite, then $ \limsp_{n\to \infty} w_{\varepsilon} (x_{n})\leq w_{\varepsilon} (x_{0})$. Otherwise, it suffices to consider the case $\Bb{N}_{2} \subset \Bb{N}_{1}$ in which
$\lim_{k\to \infty}|\theta_{\varepsilon} (x_{n_{k}})|  = 1$ and therefore $ \lim_{k\to \infty} w_{\varepsilon} (x_{n_{k}}) \leq \max \{
 \beta_{\varepsilon} (x_{0} - \mathrm{e}^{-\varepsilon |x_{0}|}),  \beta_{\varepsilon} (x_{0} + \mathrm{e}^{-\varepsilon |x_{0}|})\}\leq w_{\varepsilon} (x_{0})$,
 by virtue of \eqref{plem81zb} and the upper semi-continuity of  $\beta_{\varepsilon}$. This completes the proof of $w_{\varepsilon} \in \mathcal{W}^{*} (\Bb{R})$ (see \cite[Theorem 2, p.150]{nat1}).

Assume that $w_{\varepsilon} \notin \mathcal{W}^{{\rm{reg}}}(\Bb{R})$.  Then, for some $m \in \Bb{N}_{0}$
we have $(1+x^{2m})w_{\varepsilon} \notin \mathcal{W}^{{\rm{dens}}}(\Bb{R})$ and by Theorem~\ref{tha}
there exists an entire function $F \in \mathcal{E}_{0} (\Bb{R})$  such that
\begin{equation}\label{plem81}
 \sum_{\lambda \in \Lambda_{F}}\frac{1}{\left(1+\lambda^{2m}\right) w_{\varepsilon}(\lambda)\left|F^{\,\prime}(\lambda)\right|} < \infty.
\end{equation}
It follows from $w_{\varepsilon}\in \mathcal{W}^{*}(\Bb{R})$ that $\sum_{\lambda \in \Lambda_{F} } 1/|F^{\,\prime}(\lambda)| < \infty$
and therefore \eqref{8j} holds for $B=F$.

By Theorem~\ref{thc},
\begin{equation}\label{plem81z}
 \beta_{\varepsilon} \in \mathcal{W}^{{\rm{reg}}}(\Bb{R}).
\end{equation}
From \eqref{plem81} and  \eqref{plem81zb} we obtain
\begin{equation}\label{plem81a}
  \sum_{\lambda \in \Lambda_{F} } \frac{1}{\left(1 + \lambda^{2m}\right)\, \beta_{\varepsilon}(\lambda +
  \theta_{\varepsilon}(\lambda)\mathrm{e}^{-\varepsilon |\lambda|})\, \left|F^{\,\prime} (\lambda)\right|} < \infty.
\end{equation}
Applying Lemma~\ref{lm1} for $\delta = \varepsilon /2$, we find $T_{\varepsilon} > 0$
such that $\mathrm{e}^{-\varepsilon x/2} \leq \rho_{\varepsilon /2}$, $x \geq T_{\varepsilon}$,
and then we find an entire function $D \in \mathcal{E}_{0}(\Bb{R})$ with zeros $\Lambda_{D}  =
\left\{d_{\lambda}\right\}_{\lambda \in \Lambda_{B}}$, where
\begin{equation*}
  d_{\lambda} = \lambda,\ \ \  \lambda \in \Lambda_{F} \cap [-T_{\varepsilon}, T_{\varepsilon}],\ \ \
 d_{\lambda} = \lambda + \theta (\lambda) \mathrm{e}^{-\varepsilon |\lambda|},\ \ \
 \lambda \in \Lambda_{F} \setminus [-T_{\varepsilon}, T_{\varepsilon}].
\end{equation*}
Hence, in view of \eqref{11j} and \eqref{plem81a} we have
\begin{eqnarray*}
 \infty &>&   \sum_{\lambda \in \Lambda_{F} \setminus [-T_{\varepsilon}, T_{\varepsilon}]} \frac{1}{\left(1 + \lambda^{2m}\right)\,
 \beta_{\varepsilon} \left(\lambda + \theta_{\varepsilon}(\lambda)\mathrm{e}^{-\varepsilon |\lambda|}\right)\, \left|F^{\,\prime}(\lambda)\right|}
  \\ &  \geq&
\frac{1}{C_{\varepsilon/2}}\sum_{\lambda \in \Lambda_{F}  \setminus [-T_{\varepsilon}, T_{\varepsilon}]}\frac{1 + d_{\lambda}^{2m}}{1 +
\lambda^{2m}}\ \frac{1}{\left(1 + d_{\lambda}^{2m}\right)\, \beta_{\varepsilon}  \left(d_{\lambda} \right)\,  \left|D^{\,\prime} (d_{\lambda} )\right|}
\\ &  \geq&
\frac{1}{2^{2m} C_{\varepsilon/2}}\sum_{\lambda \in \Lambda_{F}  \setminus [-T_{\varepsilon}, T_{\varepsilon}]}
\frac{1}{\left(1 + d_{\lambda}^{2m}\right)\, \beta_{\varepsilon} \left(d_{\lambda} \right)\, \left|D^{\,\prime} (d_{\lambda}  )\right|} \ ,
\end{eqnarray*}
from which it follows that
\begin{equation*}
  \sum\limits_{\lambda \in \Lambda_{D}} \left(1 + \lambda^{2m}\right)^{-1} \beta_{\varepsilon} (\lambda )^{-1}
  \ \left|D^{\,\prime}(\lambda )\right|^{-1} < \infty \ .
\end{equation*}
By Theorem~\ref{tha} this means that $(1+ x^{2m})\cdot \beta_{\varepsilon}\notin \mathcal{W}^{{\rm{dens}}}(\Bb{R})$ and therefore
$\beta_{\varepsilon}\notin \mathcal{W}^{{\rm{reg}}}(\Bb{R})$. This contradicts \eqref{plem81z} and finishes the proof of Lemma~\ref{lm2}.
\end{proof}

We are now ready to prove our main result.

\begin{theorem}\label{th1}
For arbitrary $w\in \mathcal{W}^{{\rm{reg}}}(\Bb{R})$ and $\varepsilon > 0$ there exists $W_{\varepsilon} \in C^{\infty} (\Bb{R})$ such that
$W_{\varepsilon}\in \mathcal{W}^{{\rm{reg}}} (\Bb{R})$ and $W_{\varepsilon} (x) \geq w (x) + \mathrm{e}^{- \varepsilon |x|}$ for all $x\in \Bb{R}$.
\end{theorem}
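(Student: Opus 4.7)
My plan is to squeeze a $C^{\infty}$ weight between the target lower bound $w(x)+\mathrm{e}^{-\varepsilon|x|}$ and an upper envelope already known to be regular, and then to conclude by the simple monotonicity of the condition in Theorem~\ref{tha}. Applying Lemma~\ref{lm2} with $\varepsilon/2$ in place of $\varepsilon$, and using that $w\in\mathcal{W}^{{\rm{reg}}}(\Bb{R})$, the function
\[
w_{\varepsilon/2}(x)\ =\ \sup_{|t|\le \mathrm{e}^{-\varepsilon|x|/2}}\bigl(w(x+t)+\mathrm{e}^{-\varepsilon|x+t|/2}\bigr)
\]
lies in $\mathcal{W}^{{\rm{reg}}}(\Bb{R})$ and majorises $w(x)+\mathrm{e}^{-\varepsilon|x|}$. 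The task then reduces to producing $W_{\varepsilon}\in C^{\infty}(\Bb{R})$ with
\[
w(x)+\mathrm{e}^{-\varepsilon|x|}\ \le\ W_{\varepsilon}(x)\ \le\ w_{\varepsilon/2}(x),\qquad x\in\Bb{R},
\]
because then any $B\in\mathcal{E}_{0}(\Bb{R})$ that would, via Theorem~\ref{tha}, certify failure of density of $\mathcal{P}$ in $C^{\,0}_{(1+x^{2})^{n}W_{\varepsilon}}$ for some $n$ would also certify it in $C^{\,0}_{(1+x^{2})^{n}w_{\varepsilon/2}}$ (the de~Branges sum only decreases when $W_{\varepsilon}$ is replaced by the larger $w_{\varepsilon/2}$), contradicting $w_{\varepsilon/2}\in\mathcal{W}^{{\rm{reg}}}(\Bb{R})$.

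To build the interpolant I would use a Whitney-type partition of unity whose cells shrink at rate $\mathrm{e}^{-\varepsilon|x|/2}$. Fix a small constant $c=c(\varepsilon)>0$, to be chosen so that $2c\,\mathrm{e}^{\varepsilon c/2}\le 1$, and select a locally finite cover of $\Bb{R}$ by open intervals $I_{k}=(x_{k}-h_{k},x_{k}+h_{k})$ with $h_{k}\le c\,\mathrm{e}^{-\varepsilon|x_{k}|/2}$; such a cover is constructed inductively outward from $x_{0}=0$. Let $\{\eta_{k}\}\subset C^{\infty}(\Bb{R})$ be a subordinate partition of unity. Since $w$ is upper semi-continuous and bounded, the numbers
\[
M_{k}\ :=\ \sup_{y\in I_{k}}\bigl(w(y)+\mathrm{e}^{-\varepsilon|y|}\bigr)
\]
are finite, and I set $W_{\varepsilon}(x):=\sum_{k}M_{k}\,\eta_{k}(x)$, which is smooth by local finiteness.

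The two sandwich inequalities are then routine. For the lower bound, $\eta_{k}(x)\ne 0$ implies $x\in I_{k}$, so $M_{k}\ge w(x)+\mathrm{e}^{-\varepsilon|x|}$; summing against $\sum_{k}\eta_{k}(x)=1$ yields the required inequality. For the upper bound, $x\in I_{k}$ forces $I_{k}\subset[x-2h_{k},\,x+2h_{k}]$, while the reverse triangle inequality $|x_{k}|\ge|x|-h_{k}\ge|x|-c$ upgrades $h_{k}\le c\,\mathrm{e}^{-\varepsilon|x_{k}|/2}$ to $2h_{k}\le 2c\,\mathrm{e}^{\varepsilon c/2}\mathrm{e}^{-\varepsilon|x|/2}\le \mathrm{e}^{-\varepsilon|x|/2}$ (checking separately the case $|x|<c$ reduces to the same inequality on $c$). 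Hence $I_{k}\subset\{y:|y-x|\le\mathrm{e}^{-\varepsilon|x|/2}\}$, so $M_{k}\le w_{\varepsilon/2}(x)$ by the bound $\mathrm{e}^{-\varepsilon|y|}\le\mathrm{e}^{-\varepsilon|y|/2}$, and summation gives $W_{\varepsilon}\le w_{\varepsilon/2}$. Positivity, boundedness, rapid polynomial decay and upper semi-continuity of $W_{\varepsilon}$---i.e.\ $W_{\varepsilon}\in\mathcal{W}^{*}(\Bb{R})$---are inherited from $w_{\varepsilon/2}$ through this envelope inequality.

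The only delicate step I expect is the scale calibration of $c$: it must simultaneously absorb the discrepancy between $|x|$ and $|x_{k}|$ on each cell and keep $2h_{k}\le \mathrm{e}^{-\varepsilon|x|/2}$ uniformly valid for $x\in I_{k}$; this is what forces the condition $2c\,\mathrm{e}^{\varepsilon c/2}\le 1$ and a little care near $x=0$. Everything else---locally finite Whitney covers with the prescribed shrinkage, smooth subordinate partitions of unity, and the transfer of regular density from $w_{\varepsilon/2}$ down to $W_{\varepsilon}$ via Theorem~\ref{tha}---is classical and should cause no difficulty.
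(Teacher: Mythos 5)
Your proposal is correct, and it follows the paper's overall strategy (invoke Lemma~\ref{lm2} to produce a regular upper envelope, squeeze a smooth weight between that envelope and $w(x)+\mathrm{e}^{-\varepsilon|x|}$, and transfer regular density downward via Theorem~\ref{tha}), but it replaces the paper's smoothing mechanism with a different one. The paper introduces the intermediate scale function $\Omega_{\rho}(x)=\sup_{|s|\le \rho\mathrm{e}^{-\varepsilon|x|}}\bigl(w(x+s)+\mathrm{e}^{-\varepsilon|x+s|}\bigr)$ and then mollifies $\Omega_{1/2}$ by a \emph{variable-width bump convolution} $W_{\varepsilon}(x)=\int_{-\omega(x)}^{\omega(x)}K_{\omega}(x,t)\,\Omega_{1/2}(x+t)\,\mathrm{d}t$ with kernel width $\omega(x)\le \mathrm{e}^{-\varepsilon|x|}/4$; the sandwich $\beta_{\varepsilon}\le W_{\varepsilon}\le w_{\varepsilon}$ then comes from monotonicity of $\Omega_{\rho}$ in $\rho$ together with a few elementary inequalities. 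You instead build $W_{\varepsilon}$ as a \emph{Whitney sum} $\sum_k M_k\eta_k$ over a locally finite cover whose cells shrink like $\mathrm{e}^{-\varepsilon|x|/2}$, with $M_k$ the local supremum of $w+\mathrm{e}^{-\varepsilon|\cdot|}$, and you calibrate the cell size constant $c$ so that each $I_k$ sits inside the ball of radius $\mathrm{e}^{-\varepsilon|x|/2}$ about any of its points. Both routes rest on the same Lemma~\ref{lm2} and the same monotonicity of the de~Branges criterion, so the hard analysis is identical; the convolution has the advantage of being explicit (which is what lets the paper prove the derivative bound in Corollary~\ref{cor1}), while your partition-of-unity version is perhaps the more familiar general-purpose device. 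One small point you pass over: you should say explicitly that the required cover (with $h_k\le c\,\mathrm{e}^{-\varepsilon|x_k|/2}$, locally finite, covering $\Bb{R}$, and subordinating a $C^{\infty}$ partition of unity) actually exists; the inductive construction outward from $0$ works because the step sizes, while shrinking, cannot accumulate at a finite point, but this deserves a sentence. Apart from that, the sandwich inequalities, the verification that $W_{\varepsilon}\in\mathcal{W}^{*}(\Bb{R})$, and the downward transfer of regular density via Theorem~\ref{tha} are all carried out correctly.
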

\begin{proof} Since the statement of the theorem for $\varepsilon = \varepsilon_{0} > 0$ implies its validity for all
$\varepsilon \geq \varepsilon_{0}$, we can assume without loss of generality that $\varepsilon \in (0,1)$.

\medskip
Let $w_{\varepsilon}$ be defined as in \eqref{th1f1}, $\beta_{\varepsilon}$ as in \eqref{plem81za} and
\begin{equation*}
 \Omega_{\rho} (x) := \sup_{|s| \le \rho \mathrm{e}^{- \varepsilon |x|}} \beta_{\varepsilon} (x + s),\ \ \ x \in \Bb{R},\ \ \rho \in (0,1].
\end{equation*}
Since
\begin{equation}\label{mp2}
 w(x) \leq w(x) + \mathrm{e}^{-\varepsilon |x|} = \beta_{\varepsilon}(x) \leq \Omega_{\rho}(x) \leq w_{\varepsilon} (x),\ \  x \in \Bb{R},
\end{equation}
by Lemma~\ref{lm2},
\begin{equation*}
  \Omega_{\rho} \in \mathcal{W}^{{\rm{reg}}} (\Bb{R}),\ \ \rho \in (0,1].
\end{equation*}

\medskip
\noindent
For arbitrary $\omega \in C^{\infty} (\Bb{R})$ satisfying
\begin{equation}\label{mp1}
0 < \omega (x)    \leq  \mathrm{e}^{- \varepsilon | x | }/4, \ \ \  x \in \Bb{R},
\end{equation}
let us introduce
\begin{equation}\label{mp1x}
 K_{\omega} (x, t) :=\left(\int\nolimits_{-\omega (x)}^{\omega (x)}
  \mathrm{exp}\left(- \frac{\omega (x)^{2}}{\omega (x)^{2} - s^{2}}\right) \! {\mathrm{d}} s \!\right)^{-1}
  \! \mathrm{exp}\left(- \frac{\omega (x)^{2}}{\omega (x)^{2} - t^{2}} \right),
\end{equation}
where $t \in ( - \omega (x) , \omega (x) )$,  $x \in \Bb{R}$ and $K_{\omega} (x, \pm \omega (x)) := 0$. For example, we may take $\omega (x) =(1/4)\,
\mathrm{exp} (-x^{2}-  {\varepsilon^{2}}/{4})$.
Obviously,
\begin{equation*}
\int_{-\omega (x)}^{\omega (x)} K_{\omega} (x, t) \mathrm{d}t = 1, \ \ \  x \in \Bb{R},
\end{equation*}
and therefore the weight
\begin{equation}\label{mp1y}
  W_{\varepsilon} (x) :=   \int_{-\omega (x)}^{\omega (x)}  K_{\omega} (x, t) \Omega_{1/2} (x +t)
 \mathrm{d} t = \int_{x-\omega (x)}^{ x+\omega (x)}  K_{\omega} (x, t -x) \Omega_{1/2} (t) \mathrm{d}t
\end{equation}
belongs to $ C^{\infty}(\Bb{R})$.

\medskip
Let $x\in \Bb{R}$ be arbitrary and let $t \in \Bb{R}$ satisfy $|t|\leq \omega (x)$. Then, by \eqref{mp1} we have
$|t|\leq \mathrm{e}^{- \varepsilon |x|}/4$ and the inequalities $\mathrm{e}^{1/4}\leq 4/3$ and $0 < \varepsilon< 1$ imply
$(3/4)\mathrm{e}^{-\varepsilon|x|} \leq \mathrm{e}^{-\varepsilon |x+t|} \leq (4/3)\mathrm{e}^{-\varepsilon|x|}$. Thus, for every
$\rho \in (1/3, 1]$,
\begin{equation*}
  {(3\rho -1)}\mathrm{e}^{-\varepsilon|x|}/{4} \leq \rho \mathrm{e}^{-\varepsilon |x+t|} + t \leq(16\rho +3)\mathrm{e}^{-\varepsilon|x|}/12,
\end{equation*}
and therefore
\begin{equation*}
   \Omega_{(3\rho -1)/4} (x )\! \leq \! \Omega_{\rho} (x + t)
 \! \leq \!  \Omega_{(16\rho +3)/12} (x ),\ \  \rho \!  \in \!  (1/3, 1),\ \  |t|\!\leq\! \mathrm{e}^{- \varepsilon | x | }/4,\ \  x \in \Bb{R},
\end{equation*}

\noindent
from which we infer for $\rho = 1/2$ that
\begin{gather}\hspace{-0.2cm}
\beta_{\varepsilon} (x)\! \leq\! \Omega_{1/8} (x)\! \leq\! \Omega_{1/2} (x + t) \!\leq\! \Omega_{11/12} (x)\! \leq\!
\Omega_{1} (x)\! =\! w_{\varepsilon} (x),\, |t|\!\leq\!\omega (x), \, x \in \Bb{R}.
\label{mp4x}\end{gather}
In view of \eqref{mp2} this means that the weight $W_{\varepsilon}$ satisfies
\begin{equation}\label{mp4}
  w (x) + \mathrm{e}^{-\varepsilon |x|} \leq  W_{\varepsilon} (x)  \leq w_{\varepsilon} (x),\ \  x \in \Bb{R}.
\end{equation}
It follows from the right-hand side inequality of \eqref{mp4} that $ W_{\varepsilon}\in \mathcal{W}^{{\rm{reg}}} (\Bb{R} )$
and therefore the left-hand side inequality of  \eqref{mp4} completes the proof.
\end{proof}

Since the weight $W_{\varepsilon}$ defined in \eqref{mp1y} depends on an arbitrary
function  $\omega \in C^{\infty} (\Bb{R})$ satisfying \eqref{mp1}, we prove in the next corollary
that the special choice $\omega = \phi_{\varepsilon}$ yields a good upper estimate for $ W_{\varepsilon}^{\, \prime}$.
Here,
\begin{align}\label{fz0cor1}&
 \phi_{\varepsilon} (x) := \frac{\mathrm{e}^{-\varepsilon }}{4 \kappa} \int_{-1}^{1} {\rm{e}}^{- \tfrac{1}{1-t^{2}}}  \mathrm{e}^{- \varepsilon | x +t | } {\rm{d}} t, \ \ x \in \Bb{R} , \ \ \varepsilon > 0 \ , \\ \label{fz1cor1}&
  \kappa := \int\nolimits_{-1}^{1}
  {\rm{e}}^{- \tfrac{1}{1-t^{2}}} \mathrm{d} t = \frac{K_{1} (1/2) - K_{0} (1/2)}{\sqrt{ \mathrm{e}}} \in \left(\frac{1.2}{  \mathrm{e} } \ , \frac{1.21}{  \mathrm{e} }\right), \end{align}

\noindent
$K_{1}$, $K_{0}$ are modified Bessel functions (see \cite[(13), p.5]{erd})  and \eqref{fz1cor1} is proved in Section~\ref{corollary1}.

\begin{corollary}\label{cor1} Let $\varepsilon \in (0, 1)$,  $w\in \mathcal{W}^{{\rm{reg}}}(\Bb{R})$ and $w_{\varepsilon}$ be defined as in \eqref{th1f1}.
 Then there exists a weight $W_{\varepsilon} \in C^{\infty} (\Bb{R}) \cap \mathcal{W}^{{\rm{reg}}} (\Bb{R})$ such that $W_{\varepsilon} (x) \geq w (x) + \mathrm{e}^{- \varepsilon |x|}$ and $| W_{\varepsilon}^{\, \prime} (x) | \leq 74\, \mathrm{e}^{\varepsilon |x| } w_{\varepsilon} (x)$ for all $x\in \Bb{R}$.
\end{corollary}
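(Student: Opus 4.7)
The plan is to apply the construction from the proof of Theorem~\ref{th1} with the specific choice $\omega = \phi_{\varepsilon}$ and then estimate $W_{\varepsilon}^{\,\prime}$ directly by differentiation under the integral, where only the kernel depends on $x$. First I would verify that $\phi_{\varepsilon}$ is an admissible choice in \eqref{mp1}. Writing $\psi(t) := \mathrm{e}^{-1/(1-t^{2})}\chi_{(-1,1)}(t)$, the function $\phi_{\varepsilon}=(\mathrm{e}^{-\varepsilon}/(4\kappa))\,(\psi\ast \mathrm{e}^{-\varepsilon|\cdot|})$ is $C^{\infty}(\Bb{R})$ by standard convolution properties, while the two-sided estimate $\mathrm{e}^{-\varepsilon}\mathrm{e}^{-\varepsilon|x|}\leq \mathrm{e}^{-\varepsilon|x+t|}\leq \mathrm{e}^{\varepsilon}\mathrm{e}^{-\varepsilon|x|}$ for $|t|\leq 1$ yields both
\begin{equation*}
\phi_{\varepsilon}(x)\leq \frac{\mathrm{e}^{-\varepsilon|x|}}{4} \qquad \text{and} \qquad \phi_{\varepsilon}(x)\geq \frac{\mathrm{e}^{-2\varepsilon}}{4}\,\mathrm{e}^{-\varepsilon|x|}.
\end{equation*}
The first bound is precisely \eqref{mp1}, so Theorem~\ref{th1} immediately provides $W_{\varepsilon}\in C^{\infty}(\Bb{R})\cap \mathcal{W}^{{\rm{reg}}}(\Bb{R})$ with $W_{\varepsilon}(x)\geq w(x)+\mathrm{e}^{-\varepsilon|x|}$, and only the derivative bound remains to be proved.

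Next, after the substitution $s=\phi_{\varepsilon}(x)u$ in \eqref{mp1x}, one rewrites \eqref{mp1y} as
\begin{equation*}
W_{\varepsilon}(x)=\int_{\Bb{R}} k(x,t)\,\Omega_{1/2}(t)\,\mathrm{d}t, \qquad k(x,t):=\frac{1}{\kappa\,\phi_{\varepsilon}(x)}\,\psi\!\left(\frac{t-x}{\phi_{\varepsilon}(x)}\right).
\end{equation*}
Since $\psi$ vanishes flatly at $\pm 1$ and $\phi_{\varepsilon}$ is smooth and strictly positive, $k(\cdot,t)\in C^{\infty}(\Bb{R})$ and differentiation under the integral sign is legitimate on any compact $x$-interval. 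Because $|t-x|\leq \phi_{\varepsilon}(x)\leq \mathrm{e}^{-\varepsilon|x|}/4$ forces $\Omega_{1/2}(t)\leq w_{\varepsilon}(x)$ via the chain \eqref{mp4x} already established in the proof of Theorem~\ref{th1}, one obtains
\begin{equation*}
|W_{\varepsilon}^{\,\prime}(x)|\leq w_{\varepsilon}(x)\int_{\Bb{R}}|\partial_{x}k(x,t)|\,\mathrm{d}t.
\end{equation*}

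The product and chain rules give $\partial_{x}k(x,t)=-(\kappa\,\phi_{\varepsilon}(x)^{2})^{-1}\bigl[\phi_{\varepsilon}^{\,\prime}(x)\,(u\psi(u))^{\,\prime}+\psi^{\,\prime}(u)\bigr]$ at $u=(t-x)/\phi_{\varepsilon}(x)$, and the change of variable $t\mapsto u$ then yields
\begin{equation*}
\int_{\Bb{R}}|\partial_{x}k(x,t)|\,\mathrm{d}t\leq \frac{1}{\kappa\,\phi_{\varepsilon}(x)}\bigl(|\phi_{\varepsilon}^{\,\prime}(x)|\,C_{1}+C_{2}\bigr),
\end{equation*}
with $C_{1}:=\int_{-1}^{1}|(u\psi(u))^{\,\prime}|\,\mathrm{d}u$ and $C_{2}:=\int_{-1}^{1}|\psi^{\,\prime}(u)|\,\mathrm{d}u=2/\mathrm{e}$. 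Differentiating the convolution defining $\phi_{\varepsilon}$ and using $|\partial_{x}\mathrm{e}^{-\varepsilon|x+t|}|\leq \varepsilon\,\mathrm{e}^{-\varepsilon|x+t|}$ shows $|\phi_{\varepsilon}^{\,\prime}(x)|\leq \varepsilon\,\phi_{\varepsilon}(x)$; combining this with $1/\phi_{\varepsilon}(x)\leq 4\,\mathrm{e}^{2\varepsilon}\,\mathrm{e}^{\varepsilon|x|}$ reduces the claim to the numerical inequality $\kappa^{-1}(\varepsilon C_{1}+4\,\mathrm{e}^{2\varepsilon}C_{2})\leq 74$.

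The main obstacle is therefore the final arithmetic check. Because $u\psi(u)$ is odd and vanishes at $0$ and at $\pm 1$, one has $C_{1}=4\max_{u\in[0,1]}u\psi(u)$; the crude bound $u\psi(u)\leq \psi(0)=\mathrm{e}^{-1}$ gives $C_{1}\leq 4/\mathrm{e}$. Using in addition $\varepsilon<1$ and $\kappa>1.2/\mathrm{e}$ from \eqref{fz1cor1}, one obtains
\begin{equation*}
\kappa^{-1}(\varepsilon C_{1}+4\mathrm{e}^{2\varepsilon}C_{2})<\frac{\mathrm{e}}{1.2}\left(\frac{4}{\mathrm{e}}+8\mathrm{e}\right)=\frac{4+8\mathrm{e}^{2}}{1.2}<53<74,
\end{equation*}
which completes the proof.
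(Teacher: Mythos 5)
Your proof is correct and arrives at a sharper numerical bound ($<53$) than the paper's ($10\,\mathrm{e}^{2\varepsilon}\leq 74$), so the stated constant is comfortably met. The overall strategy is the same---take $\omega=\phi_\varepsilon$ in \eqref{mp1y} and estimate the $x$-derivative directly---but the execution differs in two genuine ways. First, the paper goes through Lemma~\ref{lem4}, whose formula \eqref{f2lem4} splits $W_\varepsilon'$ into three terms and then draws on the Bessel-function computations \eqref{pf5cor1}--\eqref{pf3cor1}; you instead write $k(x,t)=\kappa^{-1}\phi_\varepsilon(x)^{-1}\psi\bigl((t-x)/\phi_\varepsilon(x)\bigr)$ and observe that $\partial_x k$ collapses to the two-term expression $-\kappa^{-1}\phi_\varepsilon(x)^{-2}\bigl[\phi_\varepsilon'(x)\,(u\psi(u))'+\psi'(u)\bigr]$, which after the substitution $u=(t-x)/\phi_\varepsilon(x)$ is tidier to integrate. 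Second, your estimate $|\phi_\varepsilon'(x)|\leq\varepsilon\,\phi_\varepsilon(x)$, obtained by pushing the derivative onto the factor $\mathrm{e}^{-\varepsilon|\cdot|}$ in the convolution rather than onto $\psi$ as in \eqref{pf1cor1}, is simpler and $\varepsilon$-dependently sharper than the paper's bound $|\phi_\varepsilon'(x)|\leq(5/12)\,\mathrm{e}^{-\varepsilon|x|}$, and it removes the need for the first integral in \eqref{pf5cor1} entirely. One small point of rigor: the identity $C_1=4\max_{[0,1]}u\psi(u)$ needs the observation that $u\psi(u)$ is unimodal on $[0,1]$ (it is, since $(u\psi(u))'=\psi(u)\bigl[1-2u^2/(1-u^2)^2\bigr]$ changes sign exactly once there); alternatively the crude bound you actually use, $C_1\leq 4/\mathrm{e}$, follows directly from $|(u\psi)'|\leq\psi+|u|\,|\psi'|$ and $\int_{-1}^1\psi=\kappa<1.21/\mathrm{e}$, $\int_{-1}^1|\psi'|=2/\mathrm{e}$, with no unimodality required.
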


Theorem~\ref{th1} allows to assume without loss of generality that each weight in the regular part of Bernstein's approximation problem is
continuous and positive on the whole real axis. It also allows to apply for this part of the problem the sufficient
conditions for the denseness of algebraic polynomials in $C^{\,0}_{w}(\Bb{R})$ obtained earlier under this assumption (see \cite[p.869]{pol},
\cite[p.80]{m}). On the other hand, Lemma~\ref{lm2} makes it possible to replace any weight
$w \in\mathcal{W}^{*}(\Bb{R})$ by the greater step function
\begin{eqnarray*}
  \widehat{w} (x) &=& \sum_{n\in \Bb{Z}} w_{{\fo{n}}} \chi_{{\fo{[\sigma_{n}\log (1+|n|), \sigma_{n}\log (1+|n +1|))}}}(x),\ \ \ x\in \Bb{R},
 \\ w_{n} &:=& \sup_{{\fo{x \in [\sigma_{n}\log (1+|n|), \sigma_{n}\log (1+|n +1|)]}}} w (x),\ \ \sigma_{n} := \sign (n),\ \  n\in \Bb{Z},
\end{eqnarray*}
such that algebraic polynomials are regularly dense in $C^{\,0}_{w}(\Bb{R})$ if and only if they are regularly dense in  $C^{\,0}_{\widehat{w}}(\Bb{R})$.
Here, $\sign (n)$ is equal to $1$ if $n > 0$, $0$ if $n=0$ and $-1$ if $n < 0$.

Notice also that Theorem~\ref{th1} can be efficiently applied to a representation of the so-called $p$-regular measures for $1 \leq p < \infty $.
Recall (see \cite[p.250]{bs}) that a non-negative Borel measure $\mu $ on $\Bb{R}$ is  called $p$-{\emph{regular}} if   all its moments $\int_{\Bb{R}} x^{n}{\rm{ d}}\mu (x)$, $n \geq 0$, are finite and  algebraic polynomials are dense in $L_p (\Bb{R}, (1+x^{2})^{n p} {\rm{d}} \mu (x))$ for every $n \geq 0 $.
Here, for arbitrary non-negative Borel measures $\mu$, $\nu$ on $\Bb{R}$ and $g \in L_1 (\Bb{R}, {\rm{d}} \mu)$, we write ${\rm{d}} \nu (x) = g (x) {\rm{d}} \mu (x) $ or
${\rm{d}} \nu = g  {\rm{d}} \mu $ if $\nu (A) = \int_{A} g (x) {\rm{d}} \mu (x)$ for arbitrary Borel subset $A$ of $\Bb{R}$.
According to
\cite[Lemma 4, p.203]{b1}, if $\mu$ is  $p$-regular, then there exists  a finite non-negative Borel measure $\nu $ on $\Bb{R}$ and $w \in   \mathcal{W}^{{\rm{reg}}} (\Bb{R})$ such that ${\rm{d}} \mu = w^{p} \,  {\rm{d}} \nu$ (the converse is evident). Taking for this $w$ the weight $W_{\varepsilon}$ from Theorem~\ref{th1},
we obtain ${\rm{d}} \mu = w^{p}  \, {\rm{d}} \nu = W_{\varepsilon}^{p} (w/W_{\varepsilon})^{p} \,  {\rm{d}} \nu = W_{\varepsilon}^{p} \,  {\rm{d}} \widetilde{\nu}$ where $\widetilde{\nu}$ is also a non-negative finite Borel measure on $\Bb{R}$ as follows from  ${\rm{d}} \widetilde{\nu} = (w/W_{\varepsilon})^{p} \,  {\rm{d}} \nu$ and $w (x) \leq  W_{\varepsilon} (x)$ for all $x \in \Bb{R}$. Thus, the following assertion holds.

\begin{corollary}\label{cor2} Let $1 \leq p < \infty $ and a measure $\mu$ is $p$-regular. Then, for every $\varepsilon > 0$, there exists  a finite non-negative Borel measure $\nu_{\varepsilon} $ on $\Bb{R}$ and a weight $W_{\varepsilon} \in C^{\infty} (\Bb{R}) \cap \mathcal{W}^{{\rm{reg}}} (\Bb{R})$ such that $W_{\varepsilon} (x) \geq \mathrm{e}^{- \varepsilon |x|}$ for all $x\in \Bb{R}$ and ${\rm{d}} \mu \,  = \,  W_{\varepsilon}^{p} \ {\rm{d}} \nu_{\varepsilon}$.
\end{corollary}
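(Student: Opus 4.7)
The plan is to combine Theorem~\ref{th1} with the structural factorization of $p$-regular measures from \cite[Lemma 4, p.~203]{b1}. The discussion immediately preceding the statement already sketches the argument, so my task is to organize the three ingredients into a self-contained proof.

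First I would invoke \cite[Lemma 4, p.~203]{b1} to obtain a finite non-negative Borel measure $\nu$ on $\Bb{R}$ together with a weight $w \in \mathcal{W}^{\rm reg}(\Bb{R})$ such that ${\rm d}\mu = w^{p}\,{\rm d}\nu$. This converts the $p$-regularity of $\mu$, which is an abstract denseness condition, into a concrete factorization through a regular weight, to which the main theorem of the paper becomes directly applicable.

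Next, for the fixed $\varepsilon > 0$, I would apply Theorem~\ref{th1} to this particular $w$ in order to produce a weight $W_{\varepsilon} \in C^{\infty}(\Bb{R}) \cap \mathcal{W}^{\rm reg}(\Bb{R})$ satisfying $W_{\varepsilon}(x) \geq w(x) + {\rm e}^{-\varepsilon|x|}$ for every $x \in \Bb{R}$. In particular $W_{\varepsilon}(x) \geq {\rm e}^{-\varepsilon|x|} > 0$ on the whole real line, so the ratio $w/W_{\varepsilon}$ is a well-defined Borel function taking values in $[0,1]$. This positivity is what lets the subsequent step go through without measure-theoretic pathologies.

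Finally, I would define $\nu_{\varepsilon}$ by the formula ${\rm d}\nu_{\varepsilon} := (w/W_{\varepsilon})^{p}\,{\rm d}\nu$; since the density is bounded by $1$, the resulting $\nu_{\varepsilon}$ is automatically a finite non-negative Borel measure with $\nu_{\varepsilon}(\Bb{R}) \leq \nu(\Bb{R}) < \infty$. The identity ${\rm d}\mu = w^{p}\,{\rm d}\nu = W_{\varepsilon}^{p}\,(w/W_{\varepsilon})^{p}\,{\rm d}\nu = W_{\varepsilon}^{p}\,{\rm d}\nu_{\varepsilon}$ then yields the required factorization. There is no genuine obstacle here: once Theorem~\ref{th1} is in hand, the corollary reduces to a routine rearrangement of densities, the only subtlety being the strict positivity of $W_{\varepsilon}$, which rules out any division-by-zero issue and is guaranteed by the lower bound just noted.
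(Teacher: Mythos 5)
Your argument is exactly the one the paper gives in the paragraph preceding the corollary: factor ${\rm d}\mu = w^{p}\,{\rm d}\nu$ via \cite[Lemma 4, p.~203]{b1}, replace $w$ by the $W_{\varepsilon}$ from Theorem~\ref{th1}, and absorb the bounded density $(w/W_{\varepsilon})^{p}\le 1$ into a new finite measure $\nu_{\varepsilon}$. The proposal is correct and matches the paper's proof in every essential detail.
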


\vspace{0.5cm}
\section{Auxiliary Results}\label{aux}

\vspace{0.25cm}
\begin{lemma}\label{lem1}
 Let the real numbers $a $, $b $,  $x $  and $\Delta \in (0,1)$ satisfy
\begin{equation}\label{lem11}
 b \in (a-\Delta^{2}, a+\Delta^{2}), \ \ \ 0\notin (a-\Delta , a+\Delta ) \ \ \mbox{and}\ \ \ x\notin (a-2\Delta, a+2\Delta).
\end{equation}
Then,
\begin{equation*}
     \left| \left(1 - \frac{x}{a}\right) \left(1 - \frac{x}{b}\right)^{-1} \right| \leq \left(1 + \Delta\right)^{2}.
\end{equation*}
\end{lemma}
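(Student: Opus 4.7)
The plan is to factor the ratio into two pieces, each of which is a small perturbation of $1$, and bound each perturbation by $1+\Delta$.

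First I would rewrite
\begin{equation*}
 \left(1 - \frac{x}{a}\right)\left(1 - \frac{x}{b}\right)^{-1} = \frac{b(a-x)}{a(b-x)} = \frac{b}{a}\cdot \frac{a-x}{b-x},
\end{equation*}
so it suffices to prove that each factor has modulus at most $1+\Delta$. Note that the hypotheses are exactly tailored to give (i) $|a|\ge \Delta$ from $0\notin(a-\Delta,a+\Delta)$, (ii) $|b-a|<\Delta^2$ from $b\in(a-\Delta^2,a+\Delta^2)$, and (iii) $|x-a|\ge 2\Delta$ from $x\notin(a-2\Delta,a+2\Delta)$.

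For the first factor I would write $b/a = 1 + (b-a)/a$, so by (i) and (ii)
\begin{equation*}
 \left|\frac{b}{a}\right| \le 1 + \frac{|b-a|}{|a|} \le 1 + \frac{\Delta^{2}}{\Delta} = 1 + \Delta.
\end{equation*}
For the second factor I would write $(a-x)/(b-x) = 1 + (a-b)/(b-x)$, bound the denominator from below via the reverse triangle inequality using (iii) and (ii), namely
\begin{equation*}
 |b-x| \ge |a-x| - |a-b| > 2\Delta - \Delta^{2} = \Delta(2-\Delta) \ge \Delta,
\end{equation*}
since $\Delta\in(0,1)$, and conclude
\begin{equation*}
 \left|\frac{a-x}{b-x}\right| \le 1 + \frac{|a-b|}{|b-x|} \le 1 + \frac{\Delta^{2}}{\Delta} = 1 + \Delta.
\end{equation*}

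Multiplying the two bounds gives the claimed estimate $(1+\Delta)^{2}$. There is no real obstacle here: the whole argument is an application of the triangle and reverse-triangle inequalities, and the slightly unusual hypothesis that $|b-a|<\Delta^{2}$ (rather than $<\Delta$) is precisely what compensates for the fact that $|a|$ and $|b-x|$ are only known to be bounded below by $\Delta$ and $2\Delta$, respectively, allowing each perturbation to come out of size at most $\Delta$ instead of $1$.
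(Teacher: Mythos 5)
Your proof is correct and follows essentially the same path as the paper's: both factor the ratio as $\frac{b}{a}\cdot\frac{a-x}{b-x}$, bound $|b/a|\le 1+\Delta^2/|a|\le 1+\Delta$ using $|a|\ge\Delta$, and bound the second factor by $1+|b-a|/|b-x|\le 1+\Delta^2/\Delta=1+\Delta$ using $|b-x|\ge\Delta$. The only cosmetic difference is that you obtain $|b-x|\ge\Delta$ via the reverse triangle inequality through $|a-x|\ge 2\Delta$ and $|a-b|<\Delta^2$, whereas the paper notes it directly from $b\in(a-\Delta,a+\Delta)$ and $x\notin(a-2\Delta,a+2\Delta)$.
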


\begin{proof}
The conditions \eqref{lem11} imply $|a|\geq \Delta$, $ b \in  (a-\Delta , a+\Delta )$ and therefore
$|x-b|\geq \Delta$.  Thus,  $ | |b| - |a| | \leq |b -  a | \leq \Delta^{2}$ and
$ |b| \leq |a| + \Delta^{2}$, i.e. $ |b| /|a| \leq 1 + \Delta^{2} / |a|  \leq 1 + \Delta$. Finally,
\begin{align*}
    \left| \frac{1 - {x}/{a}}{1 - {x}/{b}} \right| & =
   \frac{\left|b\right|}{\left|a\right|} \frac{|x-a|}{|x-b|}  = \frac{\left|b\right|}{\left|a\right|}
\frac{\left|b - a + (x - b)\right|}{\left|x - b\right|}  \\ & \leq \frac{\left|b\right|}{\left|a\right|}
\frac{\left|b - a\right| + \left| x - b\right|}{\left|x - b\right|} \leq
\left(1 +  \Delta \right) \cdot \left(1 + \frac{\left|b - a\right|}{\left|x - b\right|}\right) \leq \left(1 + \Delta \right)^{2},
\end{align*}
which completes the proof.
\end{proof}

\begin{lemma}\label{lem2}
Let $\varepsilon \in (0, 1/(2 \mathrm{e}))$, $C_{\varepsilon} \in (0, +\infty )$ and $f$ be an entire function satisfying
\begin{equation}\label{lem21}
   |f (z)| \leq C_{\varepsilon} \mathrm{e}^{{\fo{\varepsilon |z|}}},\ \ \ z \in \Bb{C} \ .
\end{equation}
Then,
\begin{equation*}
 |f^{\,\prime} (z)|,\ \ \ \left| \frac{f (z)}{z -\lambda}\right|\leq C_{\varepsilon}
 \mathrm{e}^{{\fo{\varepsilon |z|}}},\ \ \lambda \in \Lambda_{f},\ \ z \in \Bb{C} \ .
\end{equation*}
\end{lemma}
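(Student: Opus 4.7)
\textbf{Plan for Lemma~\ref{lem2}.} The plan is to deduce both inequalities from Cauchy's integral formula with the same choice of radius $r=1/\varepsilon$, and to invoke the hypothesis $\varepsilon < 1/(2\mathrm{e})$ only at the last step, in order to absorb all numerical prefactors into $1$.

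For $|f^{\,\prime}(z)|$, I would apply the standard Cauchy formula for the derivative,
$f^{\,\prime}(z) = (2\pi i)^{-1}\oint_{|w-z|=r} f(w)(w-z)^{-2}\,\mathrm{d}w$,
on the circle of radius $r = 1/\varepsilon$. Combining \eqref{lem21} with $|w|\leq |z|+r$ on the contour gives $|f^{\,\prime}(z)| \leq r^{-1} C_{\varepsilon}\mathrm{e}^{\varepsilon(|z|+r)} = \varepsilon\mathrm{e}\cdot C_{\varepsilon}\mathrm{e}^{\varepsilon|z|}$, and $\varepsilon < 1/(2\mathrm{e})$ then forces the prefactor $\varepsilon\mathrm{e} < 1/2 < 1$, yielding the claim.

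For the bound on $g(z) := f(z)/(z-\lambda)$ I would use that $g$ is entire, since $\lambda \in \Lambda_{f}$, and split according to the size of $|z-\lambda|$. If $|z-\lambda|\geq 1$, the bound is trivial: $|g(z)| = |f(z)|/|z-\lambda| \leq |f(z)| \leq C_{\varepsilon}\mathrm{e}^{\varepsilon|z|}$. If $|z-\lambda|<1$, apply Cauchy's value formula to the entire function $g$ on the circle $|w-z|=1/\varepsilon$. Since $\varepsilon<1$, this circle has radius exceeding $|z-\lambda|$, so $\lambda$ lies inside it and $|w-\lambda|\geq 1/\varepsilon - |z-\lambda| > 1/\varepsilon - 1$ on the contour. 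Estimating with \eqref{lem21} gives
\[
|g(z)| \leq \max_{|w-z|=1/\varepsilon}\frac{|f(w)|}{|w-\lambda|} \leq \frac{C_{\varepsilon}\mathrm{e}^{\varepsilon|z|}\mathrm{e}}{1/\varepsilon - 1} = \frac{\varepsilon\mathrm{e}}{1-\varepsilon}\, C_{\varepsilon}\mathrm{e}^{\varepsilon|z|},
\]
and from $\varepsilon<1/(2\mathrm{e})<1/2$ one gets $1-\varepsilon > 1/2$, so the prefactor is strictly less than $2\varepsilon\mathrm{e} < 1$.

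The main obstacle is the numerical bookkeeping: both halves of the lemma use the \emph{full strength} of $\varepsilon<1/(2\mathrm{e})$ (not merely $\varepsilon<1/\mathrm{e}$, which is what a naive single application of Cauchy's formula would demand) in order to absorb the Cauchy-formula prefactor entirely into $1$, so the threshold $1/(2\mathrm{e})$ has to be chosen with both estimates simultaneously in mind. Beyond this, everything is routine.
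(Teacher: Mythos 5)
Your argument is correct and takes essentially the same route as the paper: Cauchy's derivative formula on the circle of radius $1/\varepsilon$ for the first bound, and a case split together with the maximum modulus principle for the quotient $f(z)/(z-\lambda)$ for the second --- the paper merely centers its circle at $\lambda$ with radius $1/(2\varepsilon)$ rather than at $z$, which yields the cleaner constant $2\varepsilon\mathrm{e}$ in place of your $\varepsilon\mathrm{e}/(1-\varepsilon)$. One small inaccuracy in your closing remark: the derivative bound needs only $\varepsilon < 1/\mathrm{e}$ to make $\varepsilon\mathrm{e} < 1$, so it is only the quotient estimate that genuinely exploits the full threshold $\varepsilon < 1/(2\mathrm{e})$.
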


\begin{proof}
Cauchy's formula  \cite[(3), p.~81]{tit1}
\begin{equation*}
  f^{\, \prime} (z) = \frac{1}{2\pi \mathrm{i}} \int_{|z-\zeta|=1/\varepsilon} \frac{f(\zeta) \mathrm{d}\zeta}{(\zeta -z)^{2}}
\end{equation*}
and \eqref{lem21} for any $z \in \Bb{C}$ yield
\begin{align*}
  \left| f^{\prime} (z)\right| & \leq \varepsilon
  \max_{|\zeta -z|=1/\varepsilon} \left|f(\zeta)\right| \leq  \varepsilon  C_{\varepsilon}\max_{|\zeta -z|=1/\varepsilon}
  \mathrm{e}^{{\fo{\varepsilon |\zeta|}}} \\ & \leq
 \varepsilon  C_{\varepsilon}  \mathrm{e}^{{\fo{\varepsilon (|z| + 1/\varepsilon)}}}=  \varepsilon \mathrm{e} C_{\varepsilon}
  \mathrm{e}^{{\fo{\varepsilon |z|}}} \leq C_{\varepsilon} \mathrm{e}^{{\fo{\varepsilon |z|}}}.
\end{align*}

For arbitrary $\lambda \in \Lambda_{f}$ and  $z \in \Bb{C}$
satisfying $ |z-\lambda|\geq 1/(2 \varepsilon)$
 it follows from \eqref{lem21} that
\begin{equation*}
  \left|  \frac{f (z)}{z-\lambda}\right| \leq 2 \varepsilon C_{\varepsilon} \mathrm{e}^{{\fo{\varepsilon |z|}}} \leq C_{\varepsilon} \mathrm{e}^{{\fo{\varepsilon |z|}}},
\end{equation*}
which by the maximum modulus principle \cite[p.~165]{tit1} yields
\begin{align*}
   \left|  \frac{f (z)}{z-\lambda}\right| & \leq
    \max_{|\zeta - \lambda| = 1/(2 \varepsilon)}
    \left|  \frac{f (\zeta)}{\zeta-\lambda}\right| = 2 \varepsilon
     \max_{|\zeta - \lambda|  = 1/(2 \varepsilon)} |f (\zeta)| \\ &  \leq 2 \varepsilon C_{\varepsilon} \max_{|\zeta - \lambda|  =
        1/(2 \varepsilon)}\mathrm{e}^{{\fo{\varepsilon |\zeta|}}}
          \leq 2 \varepsilon C_{\varepsilon} \mathrm{e}^{{\fo{\varepsilon (|z| +1/\varepsilon)}}} \leq  2 \varepsilon \mathrm{e} C_{\varepsilon}
  \mathrm{e}^{{\fo{\varepsilon |z|}}} \leq C_{\varepsilon}
  \mathrm{e}^{{\fo{\varepsilon |z|}}},
\end{align*}
provided that $|z-\lambda|\leq 1/(2 \varepsilon)$. This finishes the proof of Lemma~\ref{lem2}.
\end{proof}

\begin{lemma}\label{lem3}
Let $\varepsilon \in (0, 1/(2 \mathrm{e}))$, $C_{\varepsilon}  \in (0, +\infty )$  and $B$ be an
entire function from the class ${\mathcal{E}}_{0} (\Bb{R})$ satisfying
\begin{equation}\label{lem31}
 {\rm{(a)}} \ \   |B (z)| \leq C_{\varepsilon} \mathrm{e}^{{\fo{\varepsilon |z|}}},\ \  z \in \Bb{C},\ \ \ \ {\rm{(b)}} \ \
 \Theta_{B} := \sum_{\lambda \in \Lambda_{B}} \frac{1}{ \left|B^{\,\prime } (\lambda)\right|} < \infty \ .
\end{equation}
Then, for arbitrary $\lambda \in \Lambda_{B}$ the inequality
\begin{equation}\label{lem32}
   \left| \frac{B (x)}{x-\lambda}\right| \geq \left| B^{\,\prime} ( \lambda)\right| / 2
\end{equation}
holds for every real $x$ satisfying
\begin{equation}\label{lem33}
 |x-\lambda|\leq \frac{\mathrm{e}^{{\fo{-\varepsilon}} }}{1+ 2 C_{\varepsilon} \Theta_{B}} \mathrm{e}^{{\fo{- \varepsilon |\lambda|}}}.
\end{equation}
Thus,
\begin{equation}\label{lem34}
   \min_{\mu \in \Lambda_{B}\setminus \{\lambda\}} \left|\lambda - \mu \right| > \frac{\mathrm{e}^{{\fo{-\varepsilon}} }}{
  1+ 2 C_{\varepsilon}  \Theta_{B}} \mathrm{e}^{{\fo{- \varepsilon |\lambda|}}}, \  \ \   \lambda \in \Lambda_{B}.
\end{equation}
\end{lemma}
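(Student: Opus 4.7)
The plan is to show \eqref{lem32} by estimating how much the entire quotient $g(z) := B(z)/(z-\lambda)$ can deviate from its value at $\lambda$, using the exponential growth bound on $B$ together with Lemma~\ref{lem2}, and then to deduce \eqref{lem34} as an immediate corollary by noting that any other zero $\mu \in \Lambda_B\setminus\{\lambda\}$ in the prescribed range would force $B(\mu)=0$ while simultaneously $|g(\mu)| \geq |B'(\lambda)|/2 > 0$.

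Fix $\lambda \in \Lambda_{B}$. Since $B\in \mathcal{E}_{0}(\Bb{R})$, all zeros of $B$ are simple and real, so $g(z)=B(z)/(z-\lambda)$ is entire with $g(\lambda)=B^{\,\prime}(\lambda)$. Applying Lemma~\ref{lem2} to $f=B$ gives $|g(z)| \leq C_{\varepsilon}\mathrm{e}^{\varepsilon|z|}$ on $\Bb{C}$, and a second application (this time to $f = g$, noting $g$ inherits the same exponential type bound) gives $|g^{\,\prime}(z)| \leq C_{\varepsilon}\mathrm{e}^{\varepsilon|z|}$ on $\Bb{C}$.

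Now for real $x$ satisfying \eqref{lem33}, observe first that the right-hand side of \eqref{lem33} is at most $\mathrm{e}^{-\varepsilon} \leq 1$, so $\varepsilon |x - \lambda| \leq \varepsilon$ and hence $\mathrm{e}^{\varepsilon|x-\lambda|} \leq \mathrm{e}^{\varepsilon}$. By the fundamental theorem of calculus applied to $g$ along the real segment from $\lambda$ to $x$,
\begin{equation*}
   |g(x) - g(\lambda)| \leq |x-\lambda| \cdot \max_{t\in[\lambda, x]}|g^{\,\prime}(t)|
   \leq |x-\lambda| \cdot C_{\varepsilon}\mathrm{e}^{\varepsilon(|\lambda|+|x-\lambda|)}
   \leq |x-\lambda|\, C_{\varepsilon}\mathrm{e}^{\varepsilon}\,\mathrm{e}^{\varepsilon|\lambda|}.
\end{equation*}
Plugging in the bound \eqref{lem33} yields
\begin{equation*}
   |g(x) - g(\lambda)| \leq \frac{C_{\varepsilon}}{1 + 2 C_{\varepsilon}\Theta_{B}}.
\end{equation*}
The crucial comparison is with $|B^{\,\prime}(\lambda)|/2$: since the single term $1/|B^{\,\prime}(\lambda)|$ appears in the sum defining $\Theta_{B}$, we have $|B^{\,\prime}(\lambda)| \geq 1/\Theta_{B}$, so $|B^{\,\prime}(\lambda)|/2 \geq 1/(2\Theta_{B})$. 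The trivial inequality $2\Theta_{B}C_{\varepsilon} \leq 1 + 2\Theta_{B}C_{\varepsilon}$ then gives $C_{\varepsilon}/(1+2C_{\varepsilon}\Theta_{B}) \leq 1/(2\Theta_{B}) \leq |B^{\,\prime}(\lambda)|/2$. Therefore $|g(x)| \geq |g(\lambda)| - |g(x)-g(\lambda)| \geq |B^{\,\prime}(\lambda)|/2$, which is precisely \eqref{lem32}.

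For \eqref{lem34}, suppose for contradiction that some $\mu \in \Lambda_{B}\setminus\{\lambda\}$ satisfies $|\mu - \lambda| \leq \mathrm{e}^{-\varepsilon}\mathrm{e}^{-\varepsilon|\lambda|}/(1+2C_{\varepsilon}\Theta_{B})$. Applying \eqref{lem32} at $x=\mu$ gives $|B(\mu)/(\mu-\lambda)| \geq |B^{\,\prime}(\lambda)|/2 > 0$, contradicting $B(\mu)=0$. The main (only) potential obstacle is the tightness of the constant $\mathrm{e}^{-\varepsilon}$ in the definition of the admissible radius, which is what makes the factor $\mathrm{e}^{\varepsilon|x-\lambda|}$ collapse exactly into $\mathrm{e}^{\varepsilon}$ and allow the one-line algebraic comparison to go through.
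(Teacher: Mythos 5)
Your proof is correct and follows essentially the same route as the paper: set $g = B_\lambda := B(\cdot)/(\cdot - \lambda)$, use Lemma~\ref{lem2} (applied twice) to get $|B_\lambda'(z)| \leq C_\varepsilon \mathrm{e}^{\varepsilon|z|}$, bound $|B_\lambda(x) - B_\lambda(\lambda)|$ via the fundamental theorem of calculus using $|t| \leq |\lambda|+1$, compare against $|B^{\,\prime}(\lambda)|/2 \geq 1/(2\Theta_B)$, and deduce \eqref{lem34} from \eqref{lem32} by the zero contradiction. The only cosmetic difference is that the paper argues contrapositively (assuming the deviation exceeds $|B^{\,\prime}(\lambda)|/2$ to force $|x-\lambda|$ large), whereas you estimate the deviation directly; the inequalities and constants used are identical.
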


\begin{proof}
Let $\lambda \in \Lambda_{B}$ and
\begin{equation*}
   B_{\lambda} ( x) := \frac{B (x)}{x-\lambda} \ .
\end{equation*}
Obviously, $ B_{\lambda} ( \lambda) =B^{\,\prime}(\lambda)$ and it follows from Lemma~\ref{lem2} that
\begin{equation*}
   |B_{\lambda}^{\,\prime} (z)| \leq C_{\varepsilon} \mathrm{e}^{{\fo{\varepsilon |z|}}},\ \ \ z \in \Bb{C}.
\end{equation*}
Furthermore,  \eqref{lem31}(b) yields
\begin{equation*}
  \left|B^{\,\prime}(\lambda)\right| \geq  \Theta_{B}^{-1}.
\end{equation*}

Assume that  $x \in [-1,1]$ and
\begin{equation*}
    \left|B_{\lambda} (x + \lambda) - B_{\lambda} ( \lambda)\right| >\left| B_{\lambda} ( \lambda)\right| / 2.
\end{equation*}
Then,
\begin{eqnarray*}
 \Theta_{B}^{-1}/2  &\leq & \left|B^{\,\prime}(\lambda)\right|/2 = \left| B_{\lambda}(\lambda)\right| / 2
   <  \left|B_{\lambda} (x + \lambda)-     B_{\lambda} ( \lambda)\right| \\
   &  = & \left| \int_{0}^{|x|}B_{\lambda}^{\,\prime} (\lambda + \sigma t) \mathrm{d}t \right|\leq C_{\varepsilon}\int_{0}^{|x|}
   \mathrm{e}^{{\fo{\varepsilon |\lambda + \sigma t|}}} \mathrm{d} t
    \leq  C_{\varepsilon}  \mathrm{e}^{{\fo{\varepsilon (1+|\lambda|)}}} |x| \\
    &   < & \left( C_{\varepsilon} + \Theta_{B}^{-1}/2 \right) \mathrm{e}^{{\fo{\varepsilon (1+|\lambda|)}}} |x|,
\end{eqnarray*}
where $\sigma =1$ if $x>0$ and $\sigma =-1$ if $x<0$. This means that if
\begin{equation*}
    |x| \leq  \frac{\mathrm{e}^{{\fo{-\varepsilon (1+|\lambda|)}}}}{1+ 2 C_{\varepsilon}  \Theta_{B}} \ ,
\end{equation*}
then
\begin{equation*}
   \left|B_{\lambda} (x + \lambda)-B^{\,\prime} ( \lambda)\right| \leq \left| B^{\,\prime}(\lambda)\right| / 2,
\end{equation*}
and therefore
\begin{eqnarray*}
   | B_{\lambda } (\lambda +x)  | & =& \left| B^{\,\prime} ( \lambda) + B_{\lambda } (\lambda +x) - B^{\,\prime} ( \lambda) \right| \\
  &\geq& \left|B^{\,\prime}(\lambda)\right| - | B_{\lambda } (\lambda +x) - B^{\,\prime}(\lambda) |\geq \left| B^{\,\prime}(\lambda)\right| / 2,
\end{eqnarray*}
which was to be proved.
\end{proof}

\begin{lemma}\label{lem4}
 Let $f: \Bb{R}\to\Bb{R}$ be integrable on every compact segment $[a,b]$ of the real line, $\omega \in C^{\infty}(\Bb{R})$ be strictly positive on $\Bb{R}$,
$ \kappa $ be defined in \eqref{fz1cor1}  and
  \begin{gather}\label{f1lem4}
  f_{\omega} (x)  := \frac{1}{\kappa}
\int_{-1}^{ 1}  f (x+ t \omega (x)) \ \mathrm{exp}\left(- \frac{1}{1 -t^{2}} \right) \mathrm{d} t \ ,  \ x \in \Bb{R} \ .
  \end{gather}
\noindent
Then,  $f_{\omega}   \in C^{\infty}(\Bb{R})$  and for every $ x \in \Bb{R} $ we have
\begin{align}\nonumber
   f_{\omega}^{\,\prime} (x) & =   \frac{1}{   \kappa \, \omega (x)}
\int\limits_{-1}^{ 1}  f (x+t\omega (x))  \frac{2 t }{\left(t^{2} -1\right)^{2}} \ \mathrm{exp}\left(- \frac{1}{1 -t^{2}}  \right)  {\rm{d}} t \\ &
-   \frac{\omega^{\,\prime} (x)}{\omega (x)}  f_{\omega} (x)  + \frac{2 \, \omega^{\,\prime} (x)}{ \kappa \,  \omega (x)}  \int\limits_{-1}^{1}  f (x+t\omega (x))   \frac{
 t^{2}}{\left(t^{2} -1\right)^{2}}\ \mathrm{exp}\left(- \frac{1}{1 -t^{2}}  \right)   {\rm{d}} t  .
\label{f2lem4}\end{align}

\end{lemma}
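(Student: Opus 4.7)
The plan is to view $f_{\omega}$, via the substitution $u = x + t\omega(x)$, as an integral of $f$ against a kernel that is jointly $C^{\infty}$ in $(x,u)$ and has compact $u$-support locally in $x$; then to differentiate under the integral sign and change variables back to recover \eqref{f2lem4}. Concretely, extending $g(t) := \exp(-1/(1-t^{2}))$ (for $|t|<1$) by zero gives $\Phi \in C^{\infty}_{c}(\Bb{R})$, and \eqref{f1lem4} becomes
\[
\kappa\, f_{\omega}(x) \;=\; \int_{\Bb{R}} f(u)\, K(x,u)\, {\rm{d}} u, \qquad K(x,u) := \frac{1}{\omega(x)}\, \Phi\!\left(\frac{u-x}{\omega(x)}\right).
\]
Since $\omega \in C^{\infty}(\Bb{R})$ is strictly positive and $\Phi \in C^{\infty}_{c}(\Bb{R})$, the kernel $K$ is $C^{\infty}$ on $\Bb{R}^{2}$; for every compact $X \subset \Bb{R}$ the supports $\mathrm{supp}\, K(x,\cdot) = [x-\omega(x),\, x+\omega(x)]$ with $x \in X$ lie in a common compact subset of $\Bb{R}$, and every iterated derivative $\partial_{x}^{k} K(x,u)$ is uniformly bounded there. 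Combined with $f \in L^{1}_{\mathrm{loc}}(\Bb{R})$, this justifies iterated differentiation under the integral via dominated convergence, which at once yields $f_{\omega} \in C^{\infty}(\Bb{R})$ and
\[
\kappa\, f_{\omega}^{\,\prime}(x) \;=\; \int_{\Bb{R}} f(u)\, \partial_{x} K(x,u)\, {\rm{d}} u.
\]

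Next I would compute $\partial_{x} K(x,u)$ by the chain rule: it splits into three summands coming from (i) differentiating the prefactor $1/\omega(x)$, producing $-\omega'(x)/\omega(x)^{2}\,\Phi((u-x)/\omega(x))$; (ii) differentiating the $-x/\omega(x)$ piece of the inner argument, producing $-\Phi'((u-x)/\omega(x))/\omega(x)^{2}$; and (iii) differentiating the $-x\omega'(x)/\omega(x)^{2}$ piece, producing $-(u-x)\omega'(x)/\omega(x)^{3}\,\Phi'((u-x)/\omega(x))$. Changing variables back via $t = (u-x)/\omega(x)$ (so that $u - x = t\omega(x)$ and ${\rm{d}} u = \omega(x)\,{\rm{d}} t$) converts $\Phi$ into $g$ and $\Phi'$ into $g'$ on $(-1,1)$. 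The elementary identity
\[
 g'(t) \;=\; -\tfrac{2t}{(t^{2}-1)^{2}}\, g(t), \qquad t\in(-1,1),
\]
then transforms the two $\Phi'$-contributions into the kernels $\tfrac{2t}{(t^{2}-1)^{2}} g(t)$ and $\tfrac{2t^{2}}{(t^{2}-1)^{2}} g(t)$ appearing in \eqref{f2lem4}, while the $\Phi$-contribution reassembles as $-(\omega'(x)/\omega(x))\, f_{\omega}(x)$. Dividing through by $\kappa$ and collecting the three terms reproduces \eqref{f2lem4} exactly.

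The main (and very mild) obstacle is the bookkeeping in $\partial_{x} K$: one must carry the factor $(u-x)\omega'(x)/\omega(x)^{2}$ through the substitution $u - x = t\omega(x)$ so that the quadratic weight $t^{2}$ appears in front of the third integral on the right of \eqref{f2lem4}. All analytic ingredients --- smoothness and compact support of the bump $\Phi$, boundedness of all its derivatives, and dominated convergence for the Leibniz rule --- are entirely classical, so once the chain-rule calculation is laid out cleanly, both the $C^{\infty}$-regularity and the explicit formula follow simultaneously without any approximation of $f$.
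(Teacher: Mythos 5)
Your proposal is correct and follows essentially the same route as the paper: both rewrite $\kappa f_{\omega}(x)$ via the substitution $u = x + t\omega(x)$ as an integral of $f$ against a smooth, compactly supported kernel in $u$ depending smoothly on $x$, differentiate under the integral sign, and change variables back, using $g'(t) = -\tfrac{2t}{(t^{2}-1)^{2}}g(t)$ to produce the three terms of \eqref{f2lem4}. Your formulation with the globally defined bump $\Phi\in C^{\infty}_{c}(\Bb{R})$ and integration over all of $\Bb{R}$ is a slightly tidier way to package the same computation and makes the justification of Leibniz's rule a bit more explicit than in the paper, but the substance is identical.
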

\begin{proof}
Since
\begin{gather*}
\kappa  f_{\omega} (x)  =
\frac{1}{ \omega (x)}
\int_{x-\omega (x)}^{ x+\omega (x)}  f (t) \, \mathrm{exp}\left(- \frac{\omega (x)^{2}}{\omega (x)^{2} -(t-x)^{2}} \right) \mathrm{d} t  , \ \ x \in \Bb{R} ,
\end{gather*}

\noindent
then $f_{\omega}   \in C^{\infty}(\Bb{R})$  and for arbitrary $ x \in \Bb{R} $ we obtain
\begin{gather*}
 \kappa  f_{\omega}^{\,\prime} (x)    = -\kappa   f_{\omega} (x)   \frac{\omega^{\,\prime} (x)}{\omega (x)}+\frac{1}{ \omega (x)}
\int_{x-\omega (x)}^{ x+\omega (x)} f (t)  T_{\omega} (x, t) \mathrm{d} t ,
\end{gather*}

\noindent
where
\begin{multline*}
    T_{\omega} (x, t) :=  \frac{\mathrm{d}}{\mathrm{d} x}  \mathrm{exp}\left(- \frac{\omega (x)^{2}}{\omega (x)^{2} -(t-x)^{2}} \right)\\  =
   \left[
\frac{ 2 \, \omega (x) \, \omega^{\,\prime} (x)(t-x)^{2}}{\left((t-x)^{2} -\omega (x)^{2}\right)^{2}} + \frac{2 \, \omega (x)^{2}  (t-x)}{\left((t-x)^{2} -\omega (x)^{2}\right)^{2}} \right]  \mathrm{exp}\left(\!- \frac{\omega (x)^{2}}{\omega (x)^{2} -(t-x)^{2}} \right) ,
\end{multline*}

\noindent
from which \eqref{f2lem4} follows easily by the change of variables. Lemma~\ref{lem4} is proved.
\end{proof}

\vspace{0.15cm}
\section{\texorpdfstring{Proof of Lemma~\ref{lm1}}{Proof of Lemma 1} }\label{lemma1}

\vspace{0.25cm}
\subsection{}
If Lemma~\ref{lm1} is proved for $\delta = \delta_{0} > 0$, then for arbitrary $\delta_{1}>\delta_{0}$ it follows from
$|\lambda - d_{\lambda} | \leq \rho_{\delta_{0}} \, \mathrm{e}^{{{- \delta_{1} |\lambda|}}}\leq \rho_{\delta_{0}} \,
\mathrm{e}^{{{- \delta_{0} |\lambda|}}}$, $\lambda \in \Lambda_{B}$  that Lemma~\ref{lm1} also holds
for $\delta = \delta_{1}$ with $C_{\delta_{1}} = C_{\delta_{0}}$ and $\rho_{\delta_{1}} = \rho_{\delta_{0}}$.
Therefore, it is sufficient to prove Lemma~\ref{lm1} only for those numbers $\delta$ which satisfy
\begin{equation*}
  0 < \delta < {1}/{\mathrm{e}}.
\end{equation*}

\vspace{0.15cm}
\subsection{}\label{explicit} Let $B$ be an entire function satisfying the conditions of Lemma~\ref{lm1}. Then, these conditions are met
by any translation of $B$ of the form $B_{T_{a}} (z) := B (z+a)$, $a\in \Bb{R}\setminus\{0\}$ because
$\Lambda_{B_{T_{a}}} = \Lambda_{B}-a$, $\Theta_{B_{T_{a}}}=  \Theta_{B}$ and $B_{T_{a}} \in {\mathcal{E}}_{0}(\Bb{R})$,
where $\Theta_{B} $ denotes the value of the series in \eqref{8j}.

We show that if Lemma~\ref{lm1} is proved for the function $B$ then it also holds for any $B_{T_{a}}$, $a\in \Bb{R}\setminus\{0\}$,
with constants $ \rho_{\delta} (B_{T_{a}}) = \mathrm{e}^{-\delta |a|}\rho_{\delta} (B)$ and  $ C_{\delta} (B_{T_{a}}) = C_{\delta} (B)$.

\smallskip
Let $\delta > 0$,  $a$ be an arbitrary nonzero real number and $E:= B_{T_{a}}$. If $\left\{e_{\lambda}\right\}_{\lambda \in \Lambda_{E} }$
is any collection of real numbers satisfying $|\lambda - e_{\lambda}| \leq \rho_{\delta} (E) \exp{(- \delta |\lambda|)}$, $\lambda \in \Lambda_{E}$,
then in view of $\Lambda_{E} = \Lambda_{B}-a$ we have
\begin{equation*}
  |\lambda -a - e_{\lambda - a}| \leq \rho_{\delta} (E) \mathrm{e}^{- \delta |\lambda -a |} \leq \mathrm{e}^{\delta |a|} \rho_{\delta} (E)
  \mathrm{e}^{- \delta |\lambda  |} = \rho_{\delta} (B) \mathrm{e}^{- \delta |\lambda  |},\ \  \lambda \in \Lambda_{B},
\end{equation*}
and therefore the numbers $d_{\lambda} := e_{\lambda - a} +a$, $\lambda \in \Lambda_{B}$, satisfy condition \eqref{9j}.
Thus, there exists an entire function $D \in \mathcal{E}_{0} (\Bb{R})$ such that $ \Lambda_{D}  = \left\{d_{\lambda}\right\}_{\lambda
\in \Lambda_{B} }$ and $|B^{\,\prime}(\lambda)| \leq C_{\delta} (B) \,|D^{\,\prime}(d_{\lambda})|$, $\lambda \in \Lambda_{B}$.
Then, for the function $G (z):= D (z+a)$ we have $G \in \mathcal{E}_{0} (\Bb{R})$, $\Lambda_{G}  = \Lambda_{D}-a  =
\left\{d_{\lambda}-a\right\}_{\lambda \in \Lambda_{B}  } = \left\{d_{\lambda+a}-a\right\}_{\lambda \in \Lambda_{E}  } =
\left\{e_{\lambda}\right\}_{\lambda \in \Lambda_{E}}$ and $|E^{\,\prime} (\lambda)|=|B^{\,\prime} (a+\lambda)| \leq
C_{\delta} (B) \,|D^{\,\prime}(d_{a+\lambda})|=C_{\delta} (B) \,|G^{\,\prime }(d_{a+\lambda} -a)| = C_{\delta} (B) \,|G^{\,\prime }(e_{\lambda})|$,
$\lambda\in \Lambda_{E}$. This implies the validity of  Lemma~\ref{lm1} for $B_{T_{a}}$, as claimed.

\smallskip
We conclude that to prove Lemma~\ref{lm1} for all translations $B_{T_{a}}$, $a\in \Bb{R}$, of the entire function $B$
it is sufficient to prove it for at least one of them. We specify the translation of $B$ by choosing an $a\in \Bb{R}\setminus \Lambda_{B}$
such that $\min_{\lambda\in\Lambda_{B}, \lambda > a} (\lambda - a) = \min_{\lambda\in\Lambda_{B}, \lambda < a}(a- \lambda)$
if $\Lambda_{B}$ is unbounded in both directions, $ a > 1+ \max \Lambda_{B}$
if $\Lambda_{B}$ is bounded from above and $ a < - 1+ \min\Lambda_{B}$ if $\Lambda_{B}$ is bounded from below.
Considering such $B_{T_{a}}$ as the initial function $B$ in Lemma~\ref{lm1}, we can therefore assume that the set
$\Lambda_{B}$ of all zeros of $B$ in Lemma~\ref{lm1} obeys the following additional properties:

\begin{equation}\label{plm3a}\hspace{-0.7cm}
\begin{array}{lll} & (a)\ \ \ \ \ \  0 \notin \Lambda_{B} ; &  \\[0.15cm]
& (b) \ \ \min\limits_{\lambda \in \Lambda_{B}, \lambda > 0} |\lambda| = \min\limits_{\lambda \in \Lambda_{B}, \lambda < 0} |\lambda|
& \mbox{if} \ \sup \Lambda_{B}=+\infty \ \mbox{and}\ \inf \Lambda_{B}= -\infty;\\[0.25cm]
& (c)\ \ \ \ \  \min \Lambda_{B} > 1 & \mbox{if} \  \inf \Lambda_{B} > -\infty;\\[0.15cm]
& (d)\ \ \  \ \  \max \Lambda_{B} < - 1 & \mbox{if} \ \sup \Lambda_{B} < +\infty.
\end{array}
\end{equation}
Observe that \eqref{plm3a}(b) means the existence of two neighboring zeros $\lambda_{1}, \lambda_{2} \in \Lambda_{B}$ of
$B$ (i.e., $\lambda_{1}<\lambda_{2}$,  $(\lambda_{1}, \lambda_{2}) \cap \Lambda_{B} = \emptyset$) such that $\lambda_{1} = - \lambda_{2}$.

\vspace{0.25cm}
\subsection{}
Denote by $\Theta_{B} $ the value of the series  in \eqref{8j} and let
\begin{equation}\label{plm2}
 \varepsilon := \delta/2 \in (0,\,  1/(2 \mathrm{e})\,),\  \
 \rho_{\delta}:= \left(\frac{\mathrm{e}^{{\fo{-\varepsilon }}}}{4+ 8C_{\varepsilon}\Theta_{B}} \right)^{2}\in (0,1/16),
\end{equation}
where
\begin{equation*}
  C_{\varepsilon} := \sup_{z \in \Bb{C}} \mathrm{e}^{{\fo{-\varepsilon |z|}}}  |B (z)| < \infty.
\end{equation*}
Then, for the function $B$ the conditions of Lemma~\ref{lem3} are fulfilled and \eqref{lem33} implies that
\begin{equation}\label{plm4}
  \left[\lambda_{1}\! -\! 2 \Delta_{\lambda_{1}},  \,  \lambda_{1}\! +\! 2 \Delta_{\lambda_{1}} \right] \cap
  \left[\lambda_{2}\! -\! 2 \Delta_{\lambda_{2}}, \, \lambda_{2}\! + \! 2 \Delta_{\lambda_{2}} \right] \!=
  \! \emptyset, \, \ \lambda_{1}, \lambda_{2} \!\in \!\Lambda_{B},\ \,  \lambda_{1} \!\neq \! \lambda_{2},
\end{equation}
where
\begin{equation}\label{plm5}
  \Delta_{\lambda}:=  \sqrt{\rho_{\delta}} \ \mathrm{e}^{{\fo{- \varepsilon |\lambda|}}}\in (0,1/4),\ \  \  \lambda\in \Lambda_{B}.
\end{equation}
Actually, assume that there exist $\lambda_{1}, \lambda_{2} \in \Lambda_{B}$ such that $\lambda_{1}<\lambda_{2}$,
$(\lambda_{1}, \lambda_{2}) \cap \Lambda_{B} = \emptyset$ and
\begin{equation}\label{plm6}
\lambda_{1} + 2 \Delta_{\lambda_{1}} \geq \lambda_{2}- 2 \Delta_{\lambda_{2}}.
\end{equation}
By virtue of \eqref{lem34},
\begin{equation}\label{plm7}
  \lambda_{1} < \lambda_{2} -  4 \Delta_{\lambda_{2}},\ \ \lambda_{1} + 4 \Delta_{\lambda_{1}} < \lambda_{2},
\end{equation}
and therefore
\begin{equation*}
  \lambda_{2} - \lambda_{1} > 2 \Delta_{\lambda_{1}} + 2 \Delta_{\lambda_{2}},
\end{equation*}
which contradicts \eqref{plm6} and proves \eqref{plm4}.

\smallskip
Introduce the following neighborhood of $\Lambda_{B}$:
\begin{equation}\label{plm15a}
  \Lambda_{B}^{\!\Delta} := \bigsqcup\nolimits_{\lambda \in \Lambda_{B}}
 \left[\, \lambda\! -\! 2 \Delta_{\lambda} ,  \,  \lambda\! +\! 2 \Delta_{\lambda} \, \right].
\end{equation}

We now prove that for any two neighboring zeros $\lambda_{1}<\lambda_{2}$ of $B$ the midpoint of the interval
$[\lambda_{1}, \lambda_{2}]$ does not belong to $\Lambda_{B}^{\!\Delta}$. In fact, it follows from
$\lambda_{1}, \lambda_{2} \in \Lambda_{B}$, $\lambda_{1}<\lambda_{2}$,
$(\lambda_{1}, \lambda_{2}) \cap  \Lambda_{B} = \emptyset$ and \eqref{plm7} that
\begin{equation*}
  \frac{\lambda_{1} + \lambda_{2}}{2} <  \lambda_{2} - 2\Delta_{\lambda_{2}} \, ,\ \ \
 \lambda_{1} + 2\Delta_{\lambda_{1}} <  \frac{\lambda_{1} + \lambda_{2}}{2} \ ,
\end{equation*}
which proves
\begin{equation}\label{plm7b}\lambda_{1}<\lambda_{2}\ , \  \lambda_{1}, \lambda_{2} \in \Lambda_{B},\ \
(\lambda_{1}, \lambda_{2}) \cap  \Lambda_{B} = \emptyset  \ \Rightarrow \
  \frac{\lambda_{1} + \lambda_{2}}{2} \notin \Lambda_{B}^{\!\Delta} \ .
\end{equation}
Together with \eqref{plm3a} this property means that
\begin{equation}\label{plm7c}
   0 \notin \Lambda_{B}^{\!\Delta} \ .
\end{equation}
Actually, if $\Lambda_{B}$ is unbounded in both directions, then according to \eqref{plm3a}(b) the origin is
the midpoint of a segment joining two neighboring zeros of $B$ which have opposite signs.
It follows from \eqref{plm7b} that \eqref{plm7c} holds. In the case when $\Lambda_{B}$ is bounded from one side
the distance $\min_{\lambda\in\Lambda_{B}} |\lambda|$ between
$0$ and $\Lambda_{B}$ is greater than $1$, by virtue of \eqref{plm3a}(c), (d). But in view of \eqref{plm5},
$2 \Delta_{\lambda} < 1/2$ and therefore \eqref{plm7c} follows readily from \eqref{plm15a}.

\vspace{0.25cm}
\subsection{}  If $\{d_{\lambda}\}_{\lambda\in \Lambda_{B}}$ are arbitrary numbers satisfying \eqref{9j},
it follows from \eqref{9j}, \eqref{plm2} and \eqref{plm5} that
\begin{equation}\label{plm9}
    d_{\lambda} \in  \left[\, \lambda\! -\! \Delta_{\lambda}^{2}  , \,   \lambda\! +\! \Delta_{\lambda}^{2} \, \right]
  \subset  \left[\, \lambda \!- \! \Delta_{\lambda} , \, \lambda \!+\! \Delta_{\lambda} \, \right],\ \ \  \lambda\in \Lambda_{B},
\end{equation}
and in view of \eqref{plm4},
\begin{equation}\label{plm10}
    d_{\lambda_{0}} \notin \left[\, \lambda \!- \! 2\Delta_{\lambda}  , \,  \lambda \!+\! 2\Delta_{\lambda} \, \right],\ \ \
\lambda_{0}  , \lambda \!\in \!\Lambda_{B},\ \ \,
\lambda_{0} \!\neq \! \lambda.
\end{equation}

It is worth remembering that according to the Lindel\"{o}f theorem \cite[Th.~15, p.~28]{lev2} a set
$\Lambda \subset \Bb{R}\setminus \{0\}$ is the set of all zeros of some entire function from the class
${\mathcal{E}}_{0} (\Bb{R})$ if and only if there exists a finite limit  of $\delta_{\Lambda} (R)$ and
${ n_{\Lambda} (R)}/{R} \to 0$ as $R \to +\infty$. Here,
\begin{equation*}\label{plm15}
 \delta_{\Lambda} (R) :=  \sum\nolimits_{\lambda \in \Lambda \cap (-R , R ) } \  {1}/{\lambda },\ \ \
 n_{\Lambda} (R) := \card \left\{ \lambda \in \Lambda \ \big| \ \left| \lambda \right| < R \right\},\ \ R > 0,
\end{equation*}
and  $ \card A \in \Bb{N}_{0} \cup \{+\infty \}$ denotes   the number of elements in a set $A$.
Then, all functions $f\in {\mathcal{E}}_{0} (\Bb{R})$ satisfying $\Lambda_{f} = \Lambda$
are given by the following formula:
\begin{equation*}\label{plm16}
 f(z) = A \lim_{R \to \infty} \prod\nolimits_{\lambda \in \Lambda \cap (-R, R)}\left(1 - {z}/{\lambda }\right),
 \ \ A \in \Bb{R}\setminus \{0\},\ \ z \in \Bb{C},
\end{equation*}
where $f (0) = A \neq 0$. Thus,
\begin{equation}\label{plm17}
  B(z) = B(0) \lim_{R \to \infty}  \prod\nolimits_{\lambda \in \Lambda_{B} \cap (- R , R)} \left(1 -{z}/\lambda \right),\ \ \ z \in \Bb{C}.
\end{equation}
and it follows from $\lim_{R \to +\infty } {n_{B} (R)}/{R} = 0$ that $\sum_{\lambda \in \Lambda_{B}}1/\lambda^{2} < \infty$.

\smallskip
Denote $\Lambda_{D} := \{d_{\lambda}\}_{\lambda\in \Lambda_{B}}$. Since
$\Lambda_{B} = \{\lambda\}_{\lambda\in \Lambda_{B}}$ satisfies the conditions of Lindel\"{o}f's theorem,
they are also met by the set $\Lambda_{D}$ because
\begin{equation*}
 \left| d_{\lambda} - \lambda \right| \leq \frac{\rho_{\delta} }{\delta^{2} \lambda^{2} }, \  \ \ \lambda\in \Lambda_{B},
\end{equation*}
by virtue of \eqref{9j} and the inequality
\begin{equation}\label{plm18a}
  \exp(-x) \leq 1/x^{2},\ \ \   x > 0.
\end{equation}
Therefore, $\Lambda_{D}$ is the set of all zeros of the entire function
\begin{equation}\label{plm19}
D (z) := \lim_{R \to \infty}  \prod\limits_{\substack{\lambda \in \Lambda_{B}\\
d_{\lambda} \in (- R, R)}} \left(1 - {z}/d_{\lambda}\right),\ \ z \in \Bb{C},
\end{equation}
which belongs to the class ${\mathcal{E}}_{0} (\Bb{R})$.

Let $m$ denote the Lebesgue measure on $\Bb{R}$. Then it follows from \eqref{plm5}, \eqref{plm15a} and \eqref{plm18a} that
\begin{equation*}
  m \left(\Lambda_{B}^{\!\Delta}\right) \leq 4 \sqrt{\rho_{\delta}} \sum\limits_{\lambda \in \Lambda_{B}}
  \mathrm{e}^{-\varepsilon |\lambda|} \leq 4\varepsilon^{- 2} {\sqrt{\rho_{\delta}}}{}\sum\limits_{\lambda \in \Lambda_{B}}1/\lambda^{2}<\infty.
\end{equation*}
Hence, the set
\begin{equation*}\label{plm19c}
 \Bb{R}_{B}^{+} := [0, +\infty) \setminus  \left( \Lambda_{B}^{\!\Delta} \cup - \Lambda_{B}^{\!\Delta}\right)
\end{equation*}
is unbounded and in view of \eqref{plm9}, \eqref{plm4} and \eqref{plm15a} we have
\begin{equation}\label{plm19a}
   \left\{ \lambda \, | \, \lambda \in \Lambda_{B} \cap (-R, R)\right\} = \left\{ \lambda \, | \, \lambda \in \Lambda_{B}, \,  \
   d_{\lambda} \in (- R ,  R )  \right\},\ \  \ R \in \Bb{R}_{B}^{+}.
\end{equation}

\vspace{0.25cm}
\subsection{}
Let us estimate $ |B^{\,\prime}(\lambda_{0})| / |D^{\,\prime}(d_{\lambda_{0}})|$ for arbitrary $\lambda_{0} \in \Lambda_{B}$.
It follows from \eqref{lem32}, \eqref{lem33}, \eqref{plm2} and \eqref{plm5} that
\begin{equation*}
\left| \frac{B (x)}{x  -  \lambda}\right| \geq \left| B^{\,\prime}(\lambda)\right| / 2,\ \ \ x \in \left[\lambda - 4 \Delta_{\lambda},
\lambda   +   4 \Delta_{\lambda} \right],\ \ \ \lambda\in \Lambda_{B},
\end{equation*}
and therefore, by \eqref{plm9}, we have
\begin{equation*}
  \left| B^{\,\prime} (\lambda_{0})\right| \leq \frac{2}{|\lambda_{0} |}
 \frac{\left|B (d_{\lambda_{0}})\right|}{ \left|1 - \dfrac{d_{\lambda_{0}}}{\lambda_{0} } \right|} \ .
\end{equation*}
Then, by \eqref{plm17}, \eqref{plm19} and \eqref{plm19a},
\begin{align}\nonumber
 \frac{|B^{\,\prime } (\lambda_{0})|}{|D^{\,\prime } (d_{\lambda_{0}})|} &  \leq \frac{2}{|\lambda_{0} |}
 \frac{\left|B (d_{\lambda_{0}})\right|}{ \left|1 - \dfrac{d_{\lambda_{0}}}{\lambda_{0} } \right| |D^{\,\prime }
 (d_{\lambda_{0}})|} \\ & = \frac{2 |d_{\lambda_{0}}|
|B (0)|}{|\lambda_{0} |} \lim_{\substack{R \to\, +\infty\\ R\in \Bb{R}_{B}^{+}}}
\prod\limits_{\substack{\lambda \in \Lambda_B \cap \,( - R , R ) \\
\lambda \neq \lambda_{0} } } \quad \left| \frac{1 - {d_{\lambda_{0}}}/{\lambda } }{ 1- d_{\lambda_{0}} /d_{\lambda}}\right|.
\label{plm22}\end{align}

The relations \eqref{plm15a} and \eqref{plm7c} imply that $0 \notin
 [\lambda - 2 \Delta_{\lambda}, \, \lambda\! + 2 \Delta_{\lambda} \, ]$
and therefore $|\lambda| \leq 2 \Delta_{\lambda}$, which together with the consequence
$|d_{\lambda}| \leq |\lambda| + \Delta_{\lambda}^{2}$ of \eqref{plm9} yields in view of \eqref{plm5}
$|d_{\lambda}/\lambda | \leq 1 + \Delta_{\lambda}^{2}/|\lambda| \leq 1+ \Delta_{\lambda}/2 \leq 2$ for every
$\lambda \in \Lambda_{B}$. Thus,  in \eqref{plm22} we have $|d_{\lambda_{0}}|/|\lambda_{0}| \leq 2$.

\smallskip
Setting in Lemma~\ref{lem1}, $x=d_{\lambda_{0}}$, $a=\lambda$, $b =d_{\lambda}$  and
$\Delta = \Delta_{\lambda}$ with $\lambda_{0}$ and $\lambda$ taken from \eqref{plm22},
we obtain the validity of the conditions \eqref{lem11},
\begin{equation*} \label{plm23}
d_{\lambda} \in (\lambda-\Delta_{\lambda}^{2}, \lambda+\Delta_{\lambda}^{2}),\ \  \
0\notin (\lambda-\Delta_{\lambda} , \lambda+\Delta_{\lambda} ), \ \ \ d_{\lambda_{0}}
\notin (\lambda-2\Delta_{\lambda} , \lambda+2\Delta_{\lambda} ),
\end{equation*}
as a consequence of \eqref{plm9}, \eqref{plm15a}, \eqref{plm7c},
and \eqref{plm10}. Hence, the factors in \eqref{plm22} satisfy
\begin{equation*}
\left|\left(1 - d_{\lambda_{0}}/{\lambda } \right)
\left( 1- d_{\lambda_{0}} /d_{\lambda}\right)^{-1}\right| \leq
\left(1 + \Delta_{\lambda}\right)^{2} =
\left(1 + \sqrt{\rho_{\delta}} \ \mathrm{e}^{{\fo{- \varepsilon |\lambda|}}}\right)^{2},
\end{equation*}
by virtue of \eqref{plm5}. It follows therefore from  \eqref{plm22} that
\begin{equation}\label{plm26}
  \frac{|B^{\,\prime } (\lambda_{0})|}{|D^{\,\prime}(d_{\lambda_{0}})|} \leq C_{\delta} := 4 |B (0)| \prod\limits_{\lambda \in \Lambda_B}
  \left(1 + \sqrt{\rho_{\delta}} \ \mathrm{e}^{{\fo{- \varepsilon |\lambda|}}}\right)^{2} < \infty,\ \ \  \lambda_{0} \in \Lambda_B,
\end{equation}
where the product above is finite in view of \eqref{plm18a}, \eqref{plm3a}(a) and
$\sum_{\lambda \in \Lambda_{B}}1/\lambda^{2} < \infty$. Lemma~\ref{lm1} is proved and the formulas \eqref{plm26}, \eqref{plm2} together with the reasoning of Subsection~\ref{explicit} establish the explicit expressions for the constants $\rho_{\delta}$ and $C_{\delta}$
in \eqref{9j} and in \eqref{11j}.

\vspace{0.5cm}
\section{\texorpdfstring{Proof of Corollary~\ref{cor1}}{Proof of Corollary 1} }\label{corollary1}
\vspace{0.25cm}
We first prove \eqref{fz1cor1}.  It follows from
\begin{align*}
   \int\limits_{-1}^{1} {\rm{e}}^{- \tfrac{1}{1-t^{2}}}  \mathrm{d} t &  =\frac{1}{\rm{e}}   \int\limits_{0}^{\infty} \frac{{\rm{e}}^{- t} \mathrm{d} t}{\sqrt{t}(t+1)^{3/2} } =
    \frac{2}{{\rm{e}}}   \int\limits_{0}^{\infty} {\rm{e}}^{- t} \mathrm{d}  \sqrt{\frac{t}{t+1}}
  = \frac{2}{{\rm{e}}} \int\limits_{0}^{\infty} {\rm{e}}^{- t}   \sqrt{\frac{t}{t+1}} \mathrm{d} t   \\
  &= - \frac{2}{{\rm{e}}} \frac{\mathrm{d}}{\mathrm{d} x}  \int_{0}^{\infty} \frac{{\rm{e}}^{- x t}  \mathrm{d} t}{\sqrt{t (t+1)}} \Big|_{x=1} =
      - \frac{2}{{\rm{e}}} \frac{\mathrm{d}}{\mathrm{d} x} \mathrm{e}^{x/2} \int_{1}^{\infty} \frac{{\rm{e}}^{- (x/2) t}  \mathrm{d} t}{\sqrt{t^{2}-1)}} \Big|_{x=1}
\end{align*}

\noindent
and \cite[(19), p.82]{erd} that
\begin{gather*}
 \kappa = - \frac{2}{{\rm{e}}} \frac{\mathrm{d}}{\mathrm{d} x}\, {\rm{e}}^{x/2} K_{0} (x/2) \Big|_{x=1} =
    - \frac{K_{0} (1/2)  + K_{0}^{\, \prime} (1/2)}{\sqrt{{\rm{e}}}} = \frac{K_{1} (1/2) - K_{0} (1/2)}{\sqrt{{\rm{e}}}} \ ,
    \end{gather*}

\noindent
by virtue of \cite[(21), p.79]{erd}. The values in \cite[p.417]{abr}, $\mathrm{e}^{0.5}K_{0} (0.5) =1.52410... $ and
$\mathrm{e}^{0.5}K_{1} (0.5) = 2.73100...$  finish the proof of \eqref{fz1cor1}. Similarly, we obtain
\begin{gather}\label{pf5cor1}
\int\limits_{-1}^{ 1}   \frac{2 |t| }{\left(t^{2} -1\right)^{2}} \ \mathrm{exp}\left(- \frac{1}{1 -t^{2}}  \right)  {\rm{d}} t = \frac{2}{{\rm{e}}} \  ,
\ \
\int\limits_{-1}^{1}    \frac{
 t^{2}}{\left(t^{2} -1\right)^{2}}\ \mathrm{exp}\left(- \frac{1}{1 -t^{2}}  \right)   {\rm{d}} t= \frac{\kappa }{2} \ .
\end{gather}

Let the function  $ \phi_{\varepsilon}$ be defined in \eqref{fz0cor1}. In order to prove Corollary~\ref{cor1}, we observe that all  constant functions belong to the set  $C^{\infty}(\Bb{R})$  and therefore we can apply Lemma~\ref{lem4} to the function
$4 {\rm{e}}^{\varepsilon} \phi_{\varepsilon}$ which coincides with the function $f_{\omega}$ in \eqref{f1lem4} for $\omega \equiv 1$ and $f (x)=
 \exp (- \varepsilon |x|)$.
Thus,   \eqref{f2lem4} and   \eqref{fz0cor1} yield for every $x\in \Bb{R}$ that
\begin{gather}\label{pf1cor1}
   \phi_{\varepsilon}^{\, \prime} (x) =  \frac{{\rm{e}}^{-\varepsilon} }{   4 \kappa }
\int\limits_{-1}^{ 1} {\rm{e}}^{ - \varepsilon |x+t|}   \frac{2 t {\rm{e}}^{- \tfrac{1}{1 -t^{2}} }}{\left(1 - t^{2} \right)^{2}} \,    {\rm{d}} t, \ \
  \phi_{\varepsilon} (x) =\frac{{\rm{e}}^{-\varepsilon} }{   4 \kappa } \int\limits_{-1}^{1}\mathrm{e}^{- \varepsilon | x +t | } {\rm{e}}^{- \tfrac{1}{1-t^{2}}}
   {\rm{d}} t .
  \end{gather}

\noindent
It follows from \eqref{pf1cor1}, \eqref{pf5cor1}, \eqref{fz1cor1} and
\begin{gather*}
   - |x| - 1   \leq    -  |x+t| \leq  1- | x  | , \ \ |t| \leq 1  , \ t, x \in \Bb{R},
\end{gather*}

\noindent
that
\begin{gather}\label{pf3cor1}
 \left|\phi_{\varepsilon}^{\, \prime} (x)\right| \leq (5/12)\, {\rm{e}}^{ - \varepsilon |x|} , \ \ \ \
 ( {\rm{e}}^{-2 \varepsilon}/4) \, {\rm{e}}^{ - \varepsilon |x|} \leq  \phi_{\varepsilon} (x) \leq (1/4)\,  {\rm{e}}^{ - \varepsilon |x|} , \ x \in \Bb{R} .
\end{gather}

Let $W_{\varepsilon}$ be defined as in \eqref{mp1y} with $\omega = \phi_{\varepsilon}$. Then by Theorem~\ref{th1},  $W_{\varepsilon} \in C^{\infty} (\Bb{R}) \cap \mathcal{W}^{{\rm{reg}}} (\Bb{R})$ and $W_{\varepsilon} (x) \geq w (x) + \mathrm{e}^{- \varepsilon |x|}$ for all $x\in \Bb{R}$. Furthermore,
$W_{\varepsilon}$ is equal to the function $f_{\omega}$ in \eqref{f1lem4} for $\omega = \phi_{\varepsilon}$ and
$f = \Omega_{1/2}$. Thus, by  \eqref{f2lem4}, we obtain
\begin{align} \nonumber
  W_{\varepsilon}^{\, \prime} (x)&  =   \frac{1}{   \kappa \, \phi_{\varepsilon} (x)}
\int\limits_{-1}^{ 1}  \Omega_{1/2} (x+t\phi_{\varepsilon} (x))  \frac{2 t }{\left(t^{2} -1\right)^{2}} \ \mathrm{exp}\left(- \frac{1}{1 -t^{2}}  \right)  {\rm{d}} t -   \frac{\phi_{\varepsilon}^{\,\prime} (x)}{\phi_{\varepsilon} (x)} W_{\varepsilon} (x) \\ &
  + \frac{2 \, \phi_{\varepsilon}^{\,\prime} (x)}{ \kappa \,  \phi_{\varepsilon} (x)}  \int\limits_{-1}^{1} \Omega_{1/2} (x+t\phi_{\varepsilon} (x))   \frac{
 t^{2}}{\left(t^{2} -1\right)^{2}}\ \mathrm{exp}\left(- \frac{1}{1 -t^{2}}  \right)   {\rm{d}} t \  .
\label{pf4cor1}\end{align}

According to \eqref{mp4x} and \eqref{mp4}, $ \Omega_{1/2} (x+t\phi_{\varepsilon} (x)) \leq w_{\varepsilon} (x)$ and $W_{\varepsilon} (x)  \leq w_{\varepsilon} (x)$ for all $|t|\leq 1$ and $x \in \Bb{R}$. Therefore, it follows from \eqref{pf5cor1}, \eqref{pf3cor1} and \eqref{pf4cor1} that
\begin{align*}
  \left| W_{\varepsilon}^{\, \prime} (x)\right| & \leq w_{\varepsilon} (x) \left[\frac{ 4 \mathrm{e}^{2 \varepsilon}  \mathrm{e}^{\varepsilon |x|}}{ \kappa }\cdot \frac{2}{{\rm{e}}} +
 \frac{10  \mathrm{e}^{2 \varepsilon}}{6}   + \frac{2}{\kappa } \cdot \frac{10 \mathrm{e}^{2 \varepsilon} }{6}\cdot  \frac{\kappa }{2} \right]
\\ & \leq 10  \mathrm{e}^{2 \varepsilon} \mathrm{e}^{\varepsilon |x|} w_{\varepsilon}(x)\leq 74 \mathrm{e}^{\varepsilon |x|} w_{\varepsilon}(x) ,
\end{align*}

\noindent
which completes the proof of Corollary~\ref{cor1}.

\section*{Acknowledgment}

The authors thank the referee for important remarks resulting in the
inclusion of  Corollaries~\ref{cor1} and~\ref{cor2} in the paper.

\vspace{0.5cm}

\end{document}